\documentclass[12pt]{amsart}

\usepackage[all]{xypic}
\usepackage{tikz}
\usepackage{ragged2e}
\usetikzlibrary{arrows} 
\usetikzlibrary{decorations.markings}
\usepackage{graphicx}
\usepackage{bm}
\usepackage{float}
\usepackage{epsf}
\usepackage{verbatim} 
\usepackage{amsmath}
\usepackage{amsfonts}
\usepackage{amssymb}
\usepackage{mathrsfs}
\usepackage{amsthm}
\usepackage{newlfont}
\usepackage{enumitem}
\usepackage[new]{old-arrows}
\usepackage{booktabs}
\usepackage{enumitem}
\usepackage{makecell}
\usepackage[multiple]{footmisc}
\usepackage[answerdelayed,lastexercise]{exercise}
\usepackage[hidelinks]{hyperref}
\usepackage{mleftright}

\usepackage{fnpct}
\usepackage{colonequals} 
\usepackage{stmaryrd}

\usepackage[letterpaper, margin=1in]{geometry}

\usepackage{apptools}
\usepackage{chngcntr}

\newtheorem{thm}{Theorem}[section]
\newtheorem{prop}[thm]{Proposition}
\newtheorem{lem}[thm]{Lemma}
\newtheorem{cor}[thm]{Corollary}

\theoremstyle{definition}
\newtheorem{defn}[thm]{Definition}

\theoremstyle{remark}
\newtheorem{rem}[thm]{Remark}

\numberwithin{equation}{section}
\numberwithin{thm}{section}

\makeatletter 
\newcommand\mynobreakpar{\vspace{0.02in}\par\nobreak\@afterheading}  
\makeatother

\makeatletter
\@addtoreset{footnote}{section}
\makeatother

\usepackage{xcolor}
\hypersetup{
	colorlinks,
	linkcolor={red!50!black},
	citecolor={blue!50!black},
	urlcolor={blue!80!black}
}

\AtBeginDocument{%
	\def\MR#1{}
}

\DeclareMathOperator{\Hom}{Hom}
\DeclareMathOperator{\End}{End}
\DeclareMathOperator{\Aut}{Aut}
\DeclareMathOperator{\rank}{rank}

\DeclareMathOperator{\Frob}{Frob}

\DeclareMathOperator{\Mat}{Mat}

\DeclareMathOperator{\Gal}{Gal}

\DeclareMathOperator{\Ann}{Ann}

\DeclareMathOperator{\Cent}{Cent}

\newcommand{\id}{\mathrm{id}}

\newcommand{\ev}{\mathrm{ev}}

\newcommand{\fp}{\mathfrak{p}}

\newcommand{\F}{\mathbb{F}}

\newcommand{\To}{\longrightarrow}

\newcommand{\oF}{\overline{\F}}

\newcommand{\Mod}[1]{\ (\mathrm{mod}\ #1)}

\providecommand{\Aut}{\operatorname{Aut}}
\providecommand{\Hom}{\operatorname{Hom}}
\newcommand{\wtR}{\operatorname{wt}}
\newcommand{\dR}{d_R}
\newcommand{\kp}{\F_{\fp}}
\renewcommand{\ev}{\operatorname{ev}}

\newcommand{\ToFrom}[2]{\mathrel{\substack{%
\displaystyle\xrightarrow{\makebox[2.5em]{$\scriptstyle #1$}}\\[-0.4ex]
\displaystyle\xleftarrow[{\makebox[2.5em]{$\scriptstyle #2$}}]{}}}}

\title{Skew cyclic codes through Drinfeld modules}

\author{Giacomo Micheli}
\address{Department of Mathematics \& Statistics, The University of South Florida, Florida, United States of America}
\email{gmicheli@usf.edu}

\author{Mihran Papikian}
\address{Department of Mathematics, Pennsylvania State University, University Park, Pennsylvania, United States of America}
\email{papikian@psu.edu}

\thanks{The first author was supported by NSF CAREER grant 2338424. The second author was supported in part by the Simons Foundation, award number MPS-TSM-00008093.}

\subjclass[2020]{11G09, 11T71, 16S36, 94B15}
\keywords{Skew cyclic codes, rank-metric codes, MRD codes, Drinfeld modules, twisted polynomial rings}

\begin{document}

\begin{abstract}
We construct skew cyclic codes in the rank metric from the torsion of the
supersingular Drinfeld module $\phi_T=t+\tau^n$ over $\F_{q^r}$, where $t$
is a root of a monic irreducible $\fp_0\in\F_q[T]$ of degree $r$ coprime to
$n$. For every prime $\fp\neq\fp_0$ of degree $d$ and every divisor $m$ of
$n$, the torsion $\phi[\fp]$ is a free module over the subring
$R_0=\kappa[\pi^m]$ generated by a power of the Frobenius $\pi=\tau^r$,
$\kappa=\F_q[T]/\fp$, and the codes are the left ideals of the centralizer
$R\cong\Mat_m(R_0)$ of $R_0$. We prove a lattice
anti-isomorphism between codes and $R_0$-submodules of $\phi[\fp]$, and 
a rank-BCH bound whose designed distance is
the true distance. 
\end{abstract}

\maketitle

\section{Introduction}\label{s:intro}

Cyclic codes are the ideals of $\F_q[x]/(x^N-1)$, and their theory rests on
the correspondence between ideals and sets of roots of unity; cf. \cite[Ch.~6]{vanLint}. Rank-metric
codes, introduced by Delsarte \cite{Delsarte} and Gabidulin
\cite{Gabidulin}, replace the Hamming weight of a vector over $\F_{q^m}$ by
the $\F_q$-dimension of the span of its coordinates, and codes attaining the
corresponding Singleton bound are called maximum rank distance (MRD) codes.
The analogues of cyclic codes in this setting are the $q$-cyclic codes of
Gabidulin \cite{Gabidulin,Gabidulin2009} and, more generally, the skew
cyclic codes of Boucher, Geiselmann and Ulmer \cite{BGU}, Boucher and Ulmer
\cite{BoucherUlmer}, and Chaussade, Loidreau and Ulmer \cite{ChaussadeLU}:
left ideals in quotients $K[y;\theta]/(f)$ of twisted polynomial rings by
central elements, with the rank weight measured on the coefficient vector.
Chaussade, Loidreau and Ulmer proved a lower bound on the minimum rank
distance of such a code in terms of a Frobenius orbit contained in the root
space of its generator \cite[Prop.~1]{ChaussadeLU}, the rank-metric
counterpart of the BCH bound. Mart\'inez-Pe\~nas \cite{MP} then developed
the theory of $q^r$-cyclic codes over $\F_{q^m}$ systematically: root
spaces and cyclotomic spaces, a lattice anti-isomorphism between skew
cyclic codes and root spaces, rank-metric versions of the Hartmann--Tzeng
and shift bounds, and the identification of rank-BCH codes with the
generalized Gabidulin codes of \cite{KG} when the length equals $m$.

In \cite{MP26Rank,GMP}, rank-metric and sum-rank-metric codes were
constructed by restricting morphisms of Drinfeld modules to their torsion
submodules. The present paper applies the same principle to skew cyclic
codes and shows that the whole theory above lives on the torsion of a single
supersingular Drinfeld module. Let $A=\F_q[T]$, let $\fp_0\in A$ be monic
irreducible of degree $r$ with $\gcd(n,r)=1$, let $t\in\F_{q^r}$ be a root
of $\fp_0$, and let $\phi$ be the Drinfeld $A$-module of rank $n$ over
$\F_{q^r}$ given by
\[
\phi_T=t+\tau^n .
\]
The coprimality of $n$ and $r$ is exactly the condition that $\phi$ be
supersingular (Lemma~\ref{lem:height}), and then the Frobenius
$\pi\colonequals\tau^r$ of $\phi$ satisfies $\pi^n=\phi_{\fp_0}$. Fix a
prime $\fp\neq\fp_0$ of $A$ of degree $d$ and put $\kappa=A/\fp\cong\F_{q^d}$,
$M=\phi[\fp]\cong\kappa^n$, $E=\End_\kappa(M)$, and
$c=\fp_0\bmod\fp\in\kappa^\times$. Supersingularity makes the reduction map
$\End(\phi)\to E$ surjective, so every $\kappa$-linear operator on the
torsion arises from an endomorphism of $\phi$. For a divisor $m\mid n$ we consider
the commutative subring $R_0=\kappa[\pi^m|_M]$ of $E$ and its centralizer
\[
R\colonequals\Cent_E(R_0). 
\]
We define a \emph{skew cyclic code} to be a left ideal of $R$. The rank
weight comes from the constants: $L=\F_{q^m}\subseteq\F_{q^n}$ acts on $M$
by scalar multiplication, and $\wtR(f)$ is the $\F_q$-dimension of the space
of coefficients in $L$ needed to write $f$ over the ring $S=\kappa[\pi|_M]$.

The first group of results describes this structure
(Section~\ref{s:module}). \emph{The Frobenius $\pi$ is non-derogatory on
$\phi[\fp]$}: its minimal polynomial over $\kappa$ is $X^n-c$, even when it is inseparable
(Proposition~\ref{prop:nondero}). The proof is a saturation argument in
$\End(\phi)$ rather than an appeal to semisimplicity, and some argument is
genuinely needed, since minimal polynomials do not commute with reduction in
general. Consequently \emph{$\phi[\fp]$ is free of rank one over $S$ and
free of rank $m$ over $R_0\cong\kappa[u]/(u^s-c)$, $s=n/m$}
(Proposition~\ref{prop:free}); this is a normal basis theorem for the torsion of a
supersingular Drinfeld module. We then show that $R=\End_{R_0}(M)\cong\Mat_m(R_0)$,
$\Cent_E(R)=R_0$, 
and multiplication induces a bijection $L\otimes_{\F_q}S\to R$ 
(Proposition~\ref{prop:dc}). These are statements about supersingular
Drinfeld modules of independent interest: they exhibit, inside
$\End(\phi)/\fp\End(\phi)\cong\Mat_n(\kappa)$, an explicit
double-centralizer pair for each divisor of the rank, and they describe the
torsion as a cyclic module over the subalgebra generated by the Frobenius.

The second group of results concerns the codes. When $\gcd(m,d)=1$, the ring $R$ is a
quotient of a twisted polynomial ring over a field,
\[
R\cong K[y;\sigma]/(y^n-c),\qquad K\cong\F_{q^{md}},
\]
with $\sigma$ of order $m$ and fixed field $\kappa$
(Theorem~\ref{thm:presentation}). Thus, our codes are precisely the
$\theta$-codes of \cite{ChaussadeLU} with a central binomial modulus, and
the classical theory of generator and check polynomials is available
(Theorem~\ref{thm:generator}). Independently of any coprimality,
\emph{root spaces} are defined intrinsically as $R_0$-submodules of $M$, and
the maps $C\mapsto Z(C)$ (common kernel) and $W\mapsto\Ann(W)$ are mutually
inverse anti-isomorphisms between the lattice of codes and the lattice of
root spaces (Theorem~\ref{thm:lattice}, Corollary~\ref{cor:lattice}). The
proof is a duality argument for the Gorenstein algebra $R_0$ and uses
neither the Euclidean structure nor any separability; the problem of
recognizing root spaces, solved in \cite[Cor.~3]{MP} by a Galois-closure
criterion, disappears, because stability under $\pi^m$ is the definition.
Section~\ref{s:bounds} proves the rank-BCH bound: if $\alpha$ is $S$-normal
and $f\neq0$ vanishes on $\alpha,\alpha^{[r]},\dots,\alpha^{[r(\delta-2)]}$,
then $\wtR(f)\geq\delta$ (Theorem~\ref{thm:rankbch}). The Moore-matrix input
is a restricted nondegeneracy statement (Theorem~\ref{thm:moorefamily}),
proved by a kernel-rationality lemma and a Galois descent; it replaces
\cite[Lem.~2]{KG} and continues to function at primes where $M$ is not a
field. For the rank-BCH code $C=\Ann(W_\delta)$,
$W_\delta=\bigoplus_{i\leq\delta-2}R_0\alpha^{[ri]}$, we prove that
\emph{the designed distance is the true distance}, $\dR(C)=\delta$, with
$\dim_{\F_q}C=nd(m-\delta+1)$ (Theorem~\ref{thm:exact}).
Section~\ref{s:matrix} turns this into matrices: the coefficient map
$f\mapsto A_f\in\Mat_{m\times nd}(\F_q)$ is a rank isometry, and
\emph{every rank-BCH code is an $\F_{q^m}$-linear MRD code in
$\Mat_{m\times nd}(\F_q)$}, for all $m\mid n$, all $\fp\neq\fp_0$ and all
$1\leq\delta\leq m$ (Theorem~\ref{thm:mrd}); at $d=1$, $m=n$ these are the
generalized Gabidulin codes of \cite{KG}. The matrix codes carry a skew
constacyclic symmetry $A\mapsto QAT$ (Proposition~\ref{prop:shift}).
Finally, by Dirichlet's theorem for $\F_q[T]$, used for a similar purpose
in \cite{BDM24}, the construction can be run backwards from prescribed
parameters: for every $K=\F_{q^{md}}$ with
$\gcd(m,d)=1$, every automorphism $\sigma$ of $K$ of order $m$, and every
nonzero $c$ in the fixed field of $\sigma$, the skew constacyclic setting
$K[y;\sigma]/(y^n-c)$ arises from a supersingular Drinfeld module
(Proposition~\ref{prop:dirichlet}).

What is new here for coding theory, in relation to \cite{ChaussadeLU} and
\cite{MP}, can be summarized as follows.
\begin{enumerate}
\item The skew cyclic codes of \cite{MP} occupy a single prime of our
construction: at $\fp=\fp_0-1$, when this is irreducible, the ring $R$ is
the base change from $\F_q$ to $\F_{q^r}$ of the ring of $q^r$-polynomials
over $\F_{q^m}$ modulo $x^{[rn]}-x$ of \cite{MP}, and at $r=1$ the two
theories coincide exactly (Lemma~\ref{lem:mp}). For a general prime the
modulus is instead $y^n-c$, and the torsion module on which the codes act
need not be a field. Thus the construction leaves the $q^r$-cyclic
framework of \cite{MP}. When $\gcd(m,d)=1$, it still lies in the more
general $\theta$-code framework of \cite{ChaussadeLU}. When
$\gcd(m,d)>1$, the ring $K$ is not a field, and the codes leave that
framework as well.
\item Both \cite{ChaussadeLU} and \cite{MP} prove lower bounds on the
minimum rank distance, and in \cite{MP} the rank-BCH family is shown to be
MRD in the case $n=m$ (with $\gcd(r,n)=1$). For the present family we
prove the exact equality $\dR(C)=\delta$ and the MRD property for all
parameters covered by the construction; neither appears in
\cite{ChaussadeLU,MP}.
\item The rank-BCH bound \cite[Cor.~6]{MP}, the rank-Hartmann--Tzeng bound
\cite[Cor.~5]{MP}, and \cite[Prop.~7]{MP} are false as stated: they assume
the linear independence of the first $\delta-1$ Frobenius twists of
$\alpha$ only, whereas the argument requires the full orbit, which is the
hypothesis of \cite[Prop.~1]{ChaussadeLU}. We give a counterexample and
locate the gap in Section~\ref{sComparison}.
\item The lattice anti-isomorphism does not require the minimal
polynomial of $\pi^m|_M$ to be separable, so the repeated-root case
$p\mid n$ is included.
\item The Dirichlet argument shows that the Drinfeld-module construction is
universal for central binomial moduli.
\end{enumerate}
Our rank weight is that of \cite{MP}, measured over $\F_q$ on coefficients
in $\F_{q^m}$; the weight of \cite{ChaussadeLU} is measured over the field
of constants $\kappa$, and the two coincide when $d=1$. Section~\ref{ss:axioms} isolates the three properties of the pair
$(R,R_0)$ that the proofs consume, in the hope that they admit instances
beyond the family treated here.

\section{The module $\phi_T=t+\tau^n$}\label{s:module}

Throughout, $p$ is a prime, $q$ a power of $p$, $A=\F_q[T]$,
$F=\F_q(T)$, and $\tau$ denotes the $q$-power Frobenius $x\mapsto x^q$.
For an integer $i\geq0$, we write $[i]=q^i$, so $\beta^{[i]}=\beta^{q^i}$.
Twisted polynomial rings $K\{\tau\}$ (with $\tau c=c^q\tau$ for $c\in K$)
and Drinfeld modules are used as in \cite{PapikianBook}.

Fix $n\geq1$, a monic irreducible $\fp_0\in A$ of degree $r$, and a root
$t\in\F_{q^r}$ of $\fp_0$. Let $\phi$ be the Drinfeld module of rank
$n$ over the $A$-field $\F_{q^r}$ (with $\gamma\colon A\to\F_{q^r}$,
$T\mapsto t$) defined by
\[
\phi_T=t+\tau^n .
\]
The \textit{$A$-characteristic} of $\phi$ is $\fp_0$. Denote $\F_{\fp_0}=A/\fp_0\cong \F_{q^r}$. 

\begin{rem}
	A warning on notation: throughout, $n$ is the rank of $\phi$,
	$r=\deg\fp_0$ the degree of the $A$-characteristic, and $d=\deg\fp$ the
	degree of the code prime. In \cite{PapikianBook} and 
	\cite{MicheliPapikian},  $r$ denotes the rank and $d$ the degree of the characteristic. 
	We change the notation here to be more consistent with the notation in \cite{MP}. 
\end{rem}

Set $g\colonequals\gcd(n,r)$ and
$\pi\colonequals\tau^r$. Recall that the height $H(\phi)\in\{1,\dots,n\}$
satisfies \[\phi[\fp_0]\cong\F_{\fp_0}^{\, n-H(\phi)},\] and that
$\phi$ is \textit{supersingular} if and only if $H(\phi)=n$
\cite[Thm. 4.4.1]{PapikianBook}.

\begin{lem}\label{lem:height}
$H(\phi)=n/g$. In particular $\phi$ is supersingular if and only if
$g=1$, and in that case
\[
\phi_{\fp_0}=\tau^{nr}=\pi^n .
\]
\end{lem}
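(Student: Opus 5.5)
The plan is to compute $\phi_{\fp_0}$ explicitly in $\F_{q^r}\{\tau\}$ and read off the height from the first nonzero $\tau$-coefficient. Recall that $H(\phi)$ is defined by $\phi_{\fp_0}$ having lowest term in degree $\tau^{nH(\phi)\cdot r/\ldots}$; concretely, for $\phi$ of rank $n$ and characteristic of degree $r$, the smallest $k$ with a nonzero coefficient of $\tau^k$ in $\phi_{\fp_0}$ equals $rH(\phi)$ (equivalently $\phi[\fp_0]\cong \F_{\fp_0}^{n-H}$ as stated).

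\textbf{Step 1 (commutative subring).} Since $t\in\F_{q^r}$, we have $\tau^r t=t^{q^r}\tau^r=t\tau^r$, so $\pi=\tau^r$ commutes with $t$ and with $\tau^n$. I will work instead with the element $\tau^{\ell}$ where $\ell=\operatorname{lcm}(n,r)=nr/g$: it is central in the subring generated by $t$ and $\tau^n$, because $\tau^{\ell}$ is a power of both $\tau^n$ and $\tau^r$. The key observation is that $\tau^n t\tau^{-n}=t^{q^n}$, a Galois conjugate $\sigma^n(t)$ of $t$, where $\sigma$ is the $q$-Frobenius on $\F_{q^r}$; the orbit of $t$ under $\sigma^n$ has size $r/g$.

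\textbf{Step 2 (factor $\phi_{\fp_0}$).} Write $\fp_0(X)=\prod_{j=0}^{r-1}(X-t^{q^j})$ over $\F_{q^r}$. Group the conjugates into $g$ cosets of $\langle\sigma^n\rangle$. For one coset, the product $P(X)=\prod_{i=0}^{r/g-1}(X-t^{q^{ni}})$ has coefficients in $\F_{q^{g}}$ (fixed field of $\sigma^n$ is that of $\sigma^g$). I claim $\prod_{i}(\phi_T - t^{q^{ni}})$, suitably ordered, equals $\tau^{n\cdot r/g}=\tau^{\ell}$ times a unit-free factor: indeed $\phi_T-t^{q^{ni}}=(t-t^{q^{ni}})+\tau^n$, and one checks by induction that $(\tau^n+t-t^{q^{n(k-1)}})\cdots(\tau^n+t-t)$ telescopes, since $\tau^n(t-t^{q^{nj}})=(t^{q^n}-t^{q^{n(j+1)}})\tau^n$. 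Rather than insisting on this ordering, cleaner: $\phi_{\fp_0}=\fp_0(\phi_T)$ and evaluating, the coefficient of $\tau^0$ is $\fp_0(t)=0$; more generally the lowest-degree term is governed by how many factors $(X-t^{q^j})$ "annihilate" after twisting. I will make this precise by the standard criterion: the lowest term of $\phi_{\fp_0}$ is $\tau^{k}$ with $k$ determined by $\phi[\fp_0]$, so equivalently compute $\phi[\fp_0]$ directly.

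\textbf{Step 3 (torsion count, the main route).} Directly: $\phi[\fp_0]$ is the $\F_{\fp_0}=A/\fp_0$-module of $x\in\bar\F_q$ with $\fp_0(\phi_T)x=0$. On $\bar\F_q$, $\phi_T$ acts as $x\mapsto tx+x^{q^n}$, which is semilinear-ish; decompose using $\F_{q^{\ell}}$. I expect the cleanest computation is: in the division algebra $D=\End(\phi)\otimes F$ picture, $\pi=\tau^r$ satisfies $\pi^{n/g}$ commuting with $\phi_T$ and $\phi_{\fp_0}$ equals, up to a unit of $\F_q^\times$, $\pi^{n/g}$ raised to... Concretely I will show $\phi_{\fp_0}=\tau^{nr/g}\cdot u$ with $u$ having nonzero constant term, by noting $\fp_0(\phi_T)$ has degree $nr$ and using the norm: $\tau^{\ell}$ divides $\phi_{\fp_0}$ on the right (since $\tau^{\ell}$ is the reduced norm of $\tau^n-$ stuff in $\F_{q^r}\{\tau^n\}$ modulo the kernel). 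Then lowest degree $=nr/g=r\cdot(n/g)$, giving $H=n/g$.

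\textbf{Step 4.} If $g=1$, $H=n$, lowest degree $nr=\deg\phi_{\fp_0}$, so $\phi_{\fp_0}=\tau^{nr}$ since it is monic ($\fp_0$ monic, $\phi_T$ has leading coefficient $1$). The main obstacle is Step 2/3: proving the lowest term is exactly in degree $\ell$, i.e. the constant-in-$\tau^\ell$ coefficient is nonzero; I would handle it by working in the commutative ring $\F_{q^r}[\tau^n]$ twisted by $\sigma^n$ of order $r/g$, where $\fp_0(\phi_T)$ factors through $P(\tau^n+t)$-type products and the telescoping identity shows each coset product is $\tau^{\ell}$ plus terms with nonzero lower part only at the full $g$-fold product.
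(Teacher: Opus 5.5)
Your reduction is sound: $H(\phi)$ is read off from the lowest term of $\phi_{\fp_0}$, which lies in degree $rH(\phi)$. Step~4 also finishes correctly, since $\phi_{\fp_0}$ is monic of degree $nr$, once you know that lowest term is in degree $\ell=nr/g$. But that claim is the entire content of the lemma, and the proposal never proves it.

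The one concrete mechanism you offer does not work: splitting $\fp_0(X)=\prod_j(X-t^{q^j})$ over $\F_{q^r}$ and evaluating at $\phi_T$ factor by factor. The roots $t^{q^j}$ do not commute with $\phi_T=t+\tau^n$ unless $r\mid n$, and the claimed telescoping fails. Lowest-degree terms multiply in $\F_{q^r}\{\tau\}$, and only the factor $\phi_T-t=\tau^n$ lacks a constant term. So every ordering of $\prod_i(\phi_T-t^{q^{ni}})$ has its lowest term in degree exactly $n$, not $\ell$, whenever $r\nmid n$. For $n=1$, $r=2$ one has $\phi_{\fp_0}=\tau^2$, while
\[
(\phi_T-t^q)(\phi_T-t)=\tau^2+(t-t^q)\tau,\qquad
(\phi_T-t)(\phi_T-t^q)=\tau^2+(t^q-t)\tau .
\]
Steps~3 and~4 then defer to an unspecified ``reduced norm'' and to a ring $\F_{q^r}\{\tau^n\}$ that is not commutative when $r\nmid n$. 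So neither the divisibility of $\phi_{\fp_0}$ by $\tau^\ell$ nor the nonvanishing of its $\tau^\ell$-coefficient is actually established.

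The missing idea is an identity that holds in the noncommutative ring $\Lambda=\F_{q^r}\{\tau^n\}$. Put $r'=r/g$ and $P(X)=\prod_{i=0}^{r'-1}(X-t^{q^{ni}})$. Its coefficients lie in $\F_{q^g}$, which is central in $\Lambda$ because $g\mid n$. Hence $P(\phi_T)$ is unambiguous, and in fact $P(\phi_T)=\tau^{\ell}$.

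To see this, regard $\Lambda$ as a free right module over the commutative ring $\F_{q^r}[\tau^\ell]$ with basis $1,\tau^n,\dots,\tau^{n(r'-1)}$. Left multiplication by $\phi_T$ then has a matrix with the $r'$ conjugates $t^{q^{ni}}$ on the diagonal, ones on the subdiagonal, and $\tau^\ell$ in the top right corner. Its characteristic polynomial is $P(X)-\tau^\ell$, and Cayley--Hamilton transfers back to $\Lambda$ because these coefficients are central. This is the precise form of your reduced-norm remark.

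Since $\F_{q^g}[\phi_T]$ is commutative, $\fp_0(\phi_T)=\prod_{c=0}^{g-1}P^{(c)}(\phi_T)$, where $P^{(c)}$ is $P$ with its coefficients raised to the power $q^c$. The factor $c=0$ is $\tau^\ell$. For $0<c<g$, the $\tau^0$-coefficient of $P^{(c)}(\phi_T)$ is $P^{(c)}(t)=\prod_i\bigl(t-t^{q^{ni+c}}\bigr)\neq0$, because $ni+c\equiv0\pmod{r}$ would force $g\mid c$. Since $\tau^0$-coefficients multiply, $\phi_{\fp_0}=\tau^\ell u$ with $u$ of nonzero constant term. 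So the lowest term is in degree $\ell=r\cdot(n/g)$, which gives $H(\phi)=n/g$. For $g=1$ the identity reads $\phi_{\fp_0}=P(\phi_T)=\tau^{nr}$ directly.
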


\begin{proof}
See \cite[\S6.1]{MicheliPapikian}. 
\end{proof}

Assume from now on that $g=\gcd(n,r)=1$, and enlarge the base field to
$\F_{q^{nr}}$. Since $\phi$ is defined over $\F_{q^r}$, the element
$\pi=\tau^r$ is an endomorphism of $\phi$, called the \textit{Frobenius} of $\phi$
over $\F_{q^r}$. By  Lemma~\ref{lem:height}, $\pi^n=\phi_{\fp_0}$, so the minimal polynomial of $\pi$
over $F$ divides $X^n-\fp_0$, which is irreducible (Eisenstein at
$\fp_0$). Having degree $n=\operatorname{rank}\phi$, it is also the
characteristic polynomial of the Frobenius:
\begin{equation}\label{eq:charpoly}
P_\pi(X)=X^n-\fp_0 .
\end{equation}

\begin{lem}\label{lem:endfamily}
$\End(\phi)=\F_{q^n}[\phi_T,\pi]$, and all endomorphisms of $\phi$ are
defined over $\F_{q^{nr}}$. As an $A$-module, $\End(\phi)$ is free with
basis $\{\zeta_a\pi^b:0\leq a,b\leq n-1\}$, where
$\zeta_0,\dots,\zeta_{n-1}$ is an $\F_q$-basis of the constants
$\F_{q^n}\subseteq\End(\phi)$. The relations are $\pi^n=\phi_{\fp_0}$
and $\pi\zeta=\zeta^{[r]}\pi$, $\zeta\in\F_{q^n}$. Since
$\gcd(n,r)=1$, conjugation by $\pi$ generates $\Gal(\F_{q^n}/\F_q)$,
and $\End(\phi)$ is a maximal order in the cyclic division algebra
$\End(\phi)\otimes_AF$ of dimension $n^2$ over $F$.
\end{lem}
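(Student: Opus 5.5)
We first compute $\End(\phi)$ directly inside $\overline{\F}_q\{\tau\}$ and then identify the resulting ring with the cyclic algebra. An element $u=\sum_i u_i\tau^i$ lies in $\End(\phi)$ if and only if $u\phi_T=\phi_T u$. Comparing the coefficients of $\tau^i$ on both sides gives
\[
u_i t^{[i]}+u_{i-n}=t\,u_i+u_{i-n}^{[n]} .
\]
The plan is to show that the solutions form the ring $\F_{q^n}[\phi_T,\pi]$. The constants $\F_{q^n}$ commute with $\tau^n$ and with $t$, so they lie in $\End(\phi)$. The element $\pi=\tau^r$ commutes with $\phi_T$ because $t^{[r]}=t$. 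Hence $\F_{q^n}[\phi_T,\pi]\subseteq\End(\phi)$, and every element of this ring is a twisted polynomial with coefficients in $\F_{q^{nr}}$. Here we use that conjugation by $\pi$ raises constants to the power $q^r$, which preserves $\F_{q^n}$, and that the coefficients of $\phi_T$ lie in $\F_{q^r}$. The relations $\pi\zeta=\zeta^{[r]}\pi$ and $\pi^n=\phi_{\fp_0}$ (Lemma~\ref{lem:height}) hold by construction. Since $\gcd(n,r)=1$, the map $\zeta\mapsto\zeta^{q^r}$ generates $\Gal(\F_{q^n}/\F_q)$.

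Next we prove that the elements $\zeta_a\pi^b$ with $0\le a,b\le n-1$ are $A$-linearly independent and span $B\colonequals\F_{q^n}[\phi_T,\pi]$. The ring $D=B\otimes_AF$ is a quotient of the cyclic algebra $(\F_{q^n}(T)/F,\sigma,\fp_0)$, with $\sigma$ the $q^r$-Frobenius. This cyclic algebra is a division algebra, because $\fp_0$ has $v_{\fp_0}$-valuation $1$ and is therefore not a norm of any smaller order. Hence $D$ is simple of dimension $n^2$, the map from the cyclic algebra is an isomorphism, and the claimed basis follows. As a double check on independence, compare $\tau$-degrees modulo $nr$: the element $\zeta_a\pi^b\phi_T^k$ has leading $\tau$-degree $rb+nk$, and these degrees are distinct for $0\le b<n$.

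The main step is the reverse inclusion $\End(\phi)\subseteq B$. First, $\End(\phi)\otimes F$ is a division algebra of dimension at most $n^2$ over $F$. It contains $D$, which already has dimension $n^2$, so $\End(\phi)\otimes F=D$ and $\End(\phi)$ is an order in $D$ containing $B$. To conclude that $\End(\phi)=B$, we show that $B$ is a maximal order. Away from $\fp_0$, the local algebra $B_v$ is an unramified cyclic algebra $\F_{q^n}\otimes A_v$ twisted by $\pi$, with $\pi$ a unit since $\pi^n=\fp_0$. Such a local algebra is an Azumaya algebra, hence maximal. At $\fp_0$, the completion $D_{\fp_0}$ is the division algebra with uniformizer $\pi$. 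Its unique maximal order is $\F_{q^n}\otimes A_{\fp_0}[\pi]$, which equals $B_{\fp_0}$. Therefore $B$ is maximal, and $\End(\phi)=B$. It also follows that all endomorphisms are defined over $\F_{q^{nr}}$.

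If the maximality argument at $\fp_0$ turns out to be delicate, a fallback is the direct recursion above. For a fixed $\tau$-degree, the coefficient equation determines $u_i$ up to the kernel of $x\mapsto x(t^{[i]}-t)$. This kernel is nonzero only when $r\mid i$, in which case $u_i\in\F_{q^n}$ is forced by the comparison at $i+n$. An induction on degree then subtracts leading terms $\zeta\pi^b\phi_T^k$ to reduce $u$ to $0$. I expect the local maximality statement, or equivalently this leading-coefficient bookkeeping, to be the main obstacle.
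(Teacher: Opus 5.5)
Your main argument is correct, but it does not follow the paper's route, because the paper gives no proof here and simply cites \cite[\S6.1]{MicheliPapikian}. Your proof is self-contained, in three steps:
\begin{enumerate}
\item You get the inclusion $\F_{q^n}[\phi_T,\pi]\subseteq\End(\phi)$ and the relations by direct computation in $\oF_q\{\tau\}$.
\item You get $A$-independence of the $\zeta_a\pi^b$ from the leading $\tau$-degrees $rb+nk$. These are distinct because $rb \bmod n$ determines $b$. This already proves independence completely, so it is more than a ``double check'' and you do not need the cyclic algebra for it.
\item You get $\End(\phi)=\F_{q^n}[\phi_T,\pi]$ and maximality together, by showing the smaller order is locally maximal at every prime. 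Away from $\fp_0$ it is an Azumaya crossed product with unit parameter $\fp_0$. At $\fp_0$ it is the valuation ring of the local division algebra.
\end{enumerate}
This avoids the general theorem (Gekeler) that endomorphism rings of supersingular modules are maximal orders, and it shows where $\gcd(n,r)=1$ enters. You should say that explicitly. Because $\gcd(n,r)=1$, the constant extension is inert at $\fp_0$, so $W=\F_{q^n}\otimes_{\F_q}A_{\fp_0}$ is a discrete valuation ring with residue field $\F_{q^{nr}}$. That is what makes $D_{\fp_0}$ a division algebra with uniformizer $\pi$. It also gives $v_D\bigl(\sum_{b<n}w_b\pi^b\bigr)=\min_b\bigl(n\,v(w_b)+b\bigr)$, hence $\mathcal O_{D_{\fp_0}}=\bigoplus_{b<n}W\pi^b$. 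Your claim that the global cyclic algebra is a division algebra is more than you need. Every cyclic algebra with parameter in $F^\times$ is central simple, which suffices for the isomorphism onto $D$. And $D$ is a division algebra anyway, as a finite-dimensional domain inside $\End(\phi)\otimes_AF$.

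Your fallback recursion is wrong as written, though you do not rely on it. If $r\mid i$ and $r\geq2$, then $r\nmid i+n$. So the comparison at $i+n$ only determines $u_{i+n}$ from $u_i$, and it does not force $u_i\in\F_{q^n}$; indeed $\phi_T$ itself has $u_0=t\notin\F_{q^n}$. Two facts do hold:
\begin{enumerate}
\item The top coefficient satisfies $u_N^{[n]}=u_N$, by the comparison at $N+n$.
\item Non-vanishing propagates down the chain $N,N-n,N-2n,\dots$ as long as the index is not divisible by $r$. Hence $N-j_0n\geq0$, where $j_0\in[0,r)$ is defined by $N\equiv j_0n\pmod r$.
\end{enumerate}
So $N=rb+nk$ with $0\leq b<n$ and $k\geq0$, and subtracting $u_N\pi^b\phi_T^k$ lowers the degree. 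Repaired this way, the recursion gives an elementary proof of $\End(\phi)\subseteq\F_{q^n}[\phi_T,\pi]$. It does not prove maximality; for that you still need your local argument or Gekeler's theorem.
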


\begin{proof}
See \cite[\S6.1]{MicheliPapikian}. 
\end{proof}

It will be convenient to write
\[
B\colonequals\F_{q^n}[\phi_T]=\bigoplus_{i=0}^{n-1}A\,\zeta_i
\subseteq\End(\phi),
\]
so that Lemma~\ref{lem:endfamily} reads
\begin{equation}\label{eq:graded}
\End(\phi)=\bigoplus_{j=0}^{n-1}B\,\pi^j.
\end{equation}
Identifying $\phi_T$ with $T$, the ring $B$ is the polynomial ring
$\F_{q^n}[T]$, the integral closure of $A$ in the constant field
extension $\F_{q^n}F$. In particular, 
\begin{equation}\label{eq:olcapf}
B\cap F=A. 
\end{equation}

\vspace{0.1in}

Fix a monic irreducible $\fp\in A$ with $\fp\neq\fp_0$, put
$d=\deg\fp$, and let
\[
\kappa\colonequals\kp=A/\fp\cong\F_{q^d},\qquad
M\colonequals\phi[\fp]\subseteq\oF_{\fp_0}.
\]
Since $\fp$ differs from the $A$-characteristic, $M\cong\kappa^{\,n}$
is a free $\kappa$-module of rank $n$; see \cite[Thm. 3.5.2]{PapikianBook}.  Put
\[
E\colonequals\End_{\kappa}(M)\cong\Mat_n(\kappa),\qquad
c\colonequals\fp_0\bmod\fp\ \in\ \kappa^\times .
\]

The reduction map $\End(\phi)\to E$ factors through
$\End(\phi)/\fp\End(\phi)$ and is injective on the quotient; see \cite[Lem. 3.4.10]{PapikianBook}. 
Supersingularity is exactly the condition that the induced injection be
an isomorphism: both $\End(\phi)/\fp\End(\phi)$ and $E$ are free of
rank $n^2$ over $\kappa$ \cite[$\S$~4.4]{PapikianBook}, so every
$\kappa$-linear operator on the torsion comes from an endomorphism of
$\phi$.

\begin{prop}\label{prop:nondero} We have: 
\begin{enumerate}
\item $\pi^n$ acts on $M$ as the scalar $c$, and the characteristic
polynomial of $\pi|_M$ over $\kappa$ is $X^n-c$, i.e., the reduction of
\eqref{eq:charpoly} modulo $\fp$.
\item The minimal polynomial of $\pi|_M$ over $\kappa$ is also
$X^n-c$; that is, $\pi|_M$ is non-derogatory. 
\end{enumerate}
\end{prop}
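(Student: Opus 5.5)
For part (1), I would argue directly from Lemma~\ref{lem:height}. Since $\pi^n=\phi_{\fp_0}$ in $\End(\phi)$, and $\phi_a$ acts on $M=\phi[\fp]$ through $a\bmod\fp$ (because $M$ is an $A/\fp$-module), $\pi^n$ acts on $M$ as the scalar $c=\fp_0\bmod\fp$. For the characteristic polynomial, I would invoke the standard fact that the characteristic polynomial of an endomorphism acting on $\phi[\fp]$ is the reduction mod $\fp$ of its characteristic polynomial over $A$, i.e.\ of its $\ell$-adic (here $\fp$-adic) Tate-module characteristic polynomial. Concretely, $T_\fp(\phi)$ is free of rank $n$ over $A_\fp$, and $M=T_\fp(\phi)/\fp T_\fp(\phi)$. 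The characteristic polynomial of $\pi$ on $T_\fp(\phi)$ lies in $A[X]$ and equals $P_\pi(X)=X^n-\fp_0$ by \eqref{eq:charpoly}. Reducing modulo $\fp$ then gives $X^n-c$.

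For part (2), the minimal polynomial $\mu$ of $\pi|_M$ divides $X^n-c$. I would argue by contradiction, supposing $\deg\mu=k<n$. Then there are $b_0,\dots,b_{k-1}\in\kappa$, not all zero, with $\sum b_j\pi^j|_M=-\pi^k|_M$. This gives a nontrivial $\kappa$-linear relation among $1,\pi,\dots,\pi^{n-1}$ in $E$. Lifting the $b_j$ to $a_j\in A$ yields an element $x=\sum_{j<n} a_j\pi^j\in\End(\phi)$ with $x|_M=0$, where not all $a_j$ lie in $\fp$ (the coefficient of $\pi^k$ is $1$). Since the reduction $\End(\phi)/\fp\End(\phi)\to E$ is injective, $x\in\fp\End(\phi)$.

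The saturation step is the main point. By \eqref{eq:graded}, $\fp\End(\phi)=\bigoplus_j \fp B\,\pi^j$, so each coefficient satisfies $a_j\in \fp B$. Since $a_j\in A$ and $\fp B\cap A=\fp$ (because $B$ is free over $A$ with an $\F_q$-basis of $\F_{q^n}$ that may be taken to contain $1$; compare \eqref{eq:olcapf}), every $a_j\in\fp$. This contradicts the coefficient $1$, so $\deg\mu=n$ and $\mu=X^n-c$.

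The main obstacle is making the characteristic-polynomial claim in (1) rigorous, since it relies on the Tate-module fact. Alternatively, (1) follows once (2) is known: $\mu=X^n-c$ has degree $n=\dim_\kappa M$, so it is the characteristic polynomial. Part (2) uses only $\pi^n|_M=c$ and the directness of \eqref{eq:graded}, so I would prove (2) first and then deduce the characteristic polynomial in (1) from it, avoiding Tate modules entirely.
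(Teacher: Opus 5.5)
Your proof is correct and follows the paper's. For (1) you use $\pi^n=\phi_{\fp_0}$ acting through $\fp_0 \bmod \fp$. For (2) you use the same saturation argument, via $\ker(\End(\phi)\to E)=\fp\End(\phi)$, the decomposition \eqref{eq:graded}, and $\fp B\cap A=\fp A$; you get the last equality from an $A$-basis of $B$ containing $1$, while the paper uses $B\cap F=A$.

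The only real difference is in the characteristic-polynomial part of (1). The paper cites the fact that it is the reduction modulo $\fp$ of the characteristic polynomial over $A$. Your alternative, deducing it from (2) by Cayley--Hamilton and $\dim_\kappa M=n$, is valid because (2) uses only $\pi^n|_M=c$, and it makes the proposition self-contained.
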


\begin{proof}
(1) $\pi^n=\phi_{\fp_0}$ acts on the $A/\fp$-module $M$ as
multiplication by $\fp_0\bmod\fp=c$. The characteristic polynomial of
the Frobenius acting on the $\fp$-torsion is the reduction modulo
$\fp$ of its characteristic polynomial \eqref{eq:charpoly} over $A$; cf. 
\cite[Ch.~4]{PapikianBook}.

(2) Let $\bar\mu\in\kappa[X]$ be monic of degree $<n$ with
$\bar\mu(\pi|_M)=0$, and lift it to a monic
$\mu=\sum_{j<n}\mu_j X^j\in A[X]$ of the same degree. Since $a\in A$
acts on $M$ through $a\bmod\fp$, the endomorphism
\[
\mu(\pi)=\sum_{j<n}\phi_{\mu_j}\,\pi^j\ \in\ \End(\phi)
\]
vanishes on $M$, hence lies in $\fp\End(\phi)$ by \cite[Lem. 3.4.10]{PapikianBook}.
By \eqref{eq:graded}, $\fp\End(\phi)=\bigoplus_{j=0}^{n-1}\fp B\,\pi^j$, and
comparing components, $\mu_j\in\fp B\cap A$ for every $j$ (here we use the assumption that $\deg \mu \leq n-1$).  By
\eqref{eq:olcapf}, $\mu_j/\fp\in B\cap F=A$, i.e.\
$\mu_j\in\fp A$ for every $j$. This contradicts $\mu$ being monic.
Hence the minimal polynomial of $\pi|_M$ has degree $n$. Since
$\pi|_M$ satisfies $X^n-c$ by (1), the two coincide.
\end{proof}

Fix a divisor $m\mid n$ and set $s=n/m$. Two commutative subrings of
$E$ will play an important role throughout the paper:
\[
S\colonequals\kappa[\pi|_M],\qquad
R_0\colonequals\kappa[\pi^m|_M].
\]

\begin{prop}\label{prop:free}
$M$ is free of rank $m$ over $R_0$. More precisely:
\begin{enumerate}
\item $S\cong\kappa[x]/(x^n-c)$, and $M$ is free of rank one over $S$. 
\item $S=\bigoplus_{i=0}^{m-1}\pi^iR_0$ is free of rank $m$ over
$R_0$, and $R_0\cong\kappa[u]/(u^s-c)$ with $u=\pi^m|_M$. Hence
$M\cong S\cong R_0^{\oplus m}$.
\end{enumerate}
\end{prop}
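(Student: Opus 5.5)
The whole argument rests on Proposition~\ref{prop:nondero}: the minimal polynomial of $\pi|_M$ is $X^n-c$ and $\dim_\kappa M=n$. The plan is to derive $S$-freeness from the standard fact that a non-derogatory operator has a cyclic vector. Restriction to $R_0$ is then pure algebra.

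For (1), the evaluation map $\kappa[x]\to S$, $x\mapsto\pi|_M$, is surjective with kernel generated by the minimal polynomial. Hence $S\cong\kappa[x]/(x^n-c)$, and $\dim_\kappa S=n$. Next, $M$ is a finitely generated module over the PID $\kappa[x]$, and $x$ acts as $\pi|_M$. The structure theorem decomposes $M$ into invariant factors $d_1\mid\cdots\mid d_k$, with $d_k$ the minimal polynomial and $\prod d_i$ the characteristic polynomial. Both equal $X^n-c$ by Proposition~\ref{prop:nondero}, so $k=1$ and $M\cong\kappa[x]/(x^n-c)\cong S$ as $S$-modules. Thus $M=S\alpha$ for some $\alpha$, and $M$ is free of rank one over $S$.

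This argument works regardless of separability, since we only used the invariant factor decomposition, not diagonalization. Equivalently, one can argue by dimension: the annihilator of the $S$-module $M$ is zero, and comparing $\dim_\kappa M=\dim_\kappa S=n$ gives the result through the same cyclic-vector fact.

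For (2), put $u=\pi^m|_M$. The monomials $1,x,\dots,x^{n-1}$ form a $\kappa$-basis of $S$. Writing $j=i+mk$ with $0\le i<m$ and $0\le k<s$ shows
\[
S=\bigoplus_{i=0}^{m-1}\pi^i\,\kappa\langle 1,u,\dots,u^{s-1}\rangle .
\]
The subring $R_0=\kappa[u]$ is spanned by the powers of $u$. Because $u^s=\pi^n=c$, these reduce to $1,u,\dots,u^{s-1}$, and these are $\kappa$-independent as part of the monomial basis of $S$. Hence $R_0$ has $\kappa$-basis $1,\dots,u^{s-1}$, and the map $\kappa[u]/(u^s-c)\to R_0$ is an isomorphism. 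Comparing dimensions ($ms=n$) then gives $S=\bigoplus_{i<m}\pi^iR_0$, free over $R_0$ with basis $1,\pi,\dots,\pi^{m-1}$. Finally $M\cong S$ as $S$-modules, hence also as $R_0$-modules, which yields $M\cong R_0^{\oplus m}$ with basis $\alpha,\pi\alpha,\dots,\pi^{m-1}\alpha$.

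I do not expect a real obstacle here, since the hard input, non-derogatoriness, is already proved. The only step requiring care is the claim that a non-derogatory operator yields a free rank-one module, which we justify via invariant factors.
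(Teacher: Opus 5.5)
Your proposal is correct and follows essentially the paper's route: part (1) rests on the cyclic-vector criterion for an operator whose minimal polynomial has degree $\dim_\kappa M$, which you justify via invariant factors where the paper simply states it, and part (2) splits the powers $\pi^j$ according to $j \bmod m$. The only cosmetic difference is that the paper does this splitting in $\kappa[x]=\bigoplus_{i<m}x^i\kappa[x^m]$ and checks that the ideal $(x^n-c)$ respects the grading, whereas you work directly on the monomial basis of the quotient $S$ and finish with a dimension count.
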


\begin{proof}
(1) By Proposition~\ref{prop:nondero}(2) the minimal polynomial of
$\pi|_M$ is $X^n-c$, so $S\cong\kappa[x]/(x^n-c)$ has
$\kappa$-dimension $n=\dim_\kappa M$. A finite-dimensional module over
$\kappa[x]$ whose annihilator has degree equal to its dimension is
cyclic. Thus, \[M\cong\kappa[x]/(x^n-c)\cong S,\] is free of rank one over $S$. 

(2) Note that $\kappa[x]$ is a free $\kappa[x^m]$-module of rank $m$: 
\begin{equation}\label{eq:directsum2}\kappa[x]=\bigoplus_{i=0}^{m-1}x^i\,\kappa[x^m].
\end{equation}
The ideal $(x^n-c)\lhd \kappa[x]$ is
\textit{compatible} with this grading, 
\begin{equation}\label{eq:directsum}
(x^n-c)=\bigoplus_{i=0}^{m-1}x^i\,(x^n-c)\kappa[x^m],
\end{equation}
because the generator $x^n-c=(x^m)^s-c$ lies in the $0$-th component 
(here we use the assumption that $m\mid n$).  
The decomposition \eqref{eq:directsum}  implies that 
\[(x^n-c)\cap\kappa[x^m]=(x^n-c)\kappa[x^m].\] 
Since $R_0$ is the
image of $\kappa[x^m]$ in $S$, we get 
\[R_0\cong \kappa[x^m]/(x^n-c)\kappa[x^m]\cong\kappa[u]/(u^s-c).\] 
Next, quotienting the direct sum \eqref{eq:directsum2} by a grading compatible ideal \eqref{eq:directsum} gives 
a direct sum 
\[S=\bigoplus_{i<m}\pi^iR_0,\]  
which is the freeness of $S$ over $R_0$ with basis $1,\pi,\dots,\pi^{m-1}$.
Finally, combining with (1), 
\[M\cong S\cong R_0^{\oplus m}\]
is a free $R_0$-module of rank $m$. 
\end{proof}

\begin{rem}\label{rem1.6} 
Proposition~\ref{prop:free}(1) can be thought of as the Normal Basis Theorem for the
torsion module $\phi[\fp]$. Even more directly, if $\fp_0-1$ is irreducible and we take $\fp=\fp_0-1$,  then 
$\phi_{\fp_0-1}=\tau^{nr}-1$, so
$M=\F_{q^{nr}}$. In this case, $\pi|_M$ is the $q^r$-power Frobenius of
$\F_{q^{nr}}/\F_{q^r}$, so Proposition~\ref{prop:free}(1) is a
counterpart of the normal basis theorem for this extension. (The two
statements coincide literally when $r=1$; for $r\geq2$, note that
$\kappa$ acts on $M$ through $\phi$, not by scalar multiplication.) 
\end{rem}

Put
\[
R\colonequals\Cent_E(R_0),\qquad L\colonequals\F_{q^m},
\]
the latter viewed inside $E$ as scalar multiplications by the
constants \[\F_{q^m}\subseteq\F_{q^n}\subseteq\End(\phi).\]

\begin{prop}\label{prop:dc}
With previous notation:
\begin{enumerate}
\item $R=\End_{R_0}(M)\cong\Mat_m(R_0)$.
\item $\Cent_E(R)=R_0$.
\item $R$ is the image of
$B_m\colonequals\F_{q^m}[\phi_T,\pi]\subseteq\End(\phi)$ in $E$, and
$L$ is precisely the constant part of $R$.
\item Multiplication
\[
L\otimes_{\F_q}S\ \To\ R
\]
is bijective.
\end{enumerate}
\end{prop}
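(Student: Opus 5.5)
The plan is to derive (1) and (2) from the freeness in Proposition~\ref{prop:free}, and then (3) and (4) by a dimension count combined with the surjectivity $\End(\phi)\to E$.

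For (1), note that $R_0\subseteq E$ consists of $\kappa$-linear maps. An element of $E$ commutes with $R_0=\kappa[\pi^m|_M]$ if and only if it commutes with $\pi^m|_M$, which is the same as being $R_0$-linear. Hence $R=\End_{R_0}(M)$. By Proposition~\ref{prop:free}, $M\cong R_0^{\oplus m}$, and therefore $R\cong\Mat_m(R_0)$.

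For (2), I would use the standard fact that the center of $\Mat_m(R_0)$ is $R_0$, embedded as scalar matrices; this holds because $R_0$ is commutative. Since $R_0\subseteq R$, we have $\Cent_E(R)\subseteq\Cent_E(R_0)=R$. So $\Cent_E(R)$ is contained in the center of $R$, which is $R_0$ acting by scalars. Conversely, $R_0$ is contained in $\Cent_E(R)$ by the definition of $R$.

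For (3) and (4), I would first show that the image of $B_m$ lies in $R$. The constants $\zeta\in\F_{q^m}$ satisfy $\pi^m\zeta=\zeta^{[rm]}\pi^m=\zeta\pi^m$, since $\zeta^{q^m}=\zeta$. Moreover $\phi_T$ and $\pi$ commute with $\pi^m$. So the image of $B_m$ lies in $\Cent_E(R_0)=R$.

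Next I would compare dimensions over $\F_q$. We have $\dim_\kappa R=m^2 s=mn$, so $\dim_{\F_q}R=mnd$. On the other side, $\dim_{\F_q}(L\otimes S)=m\cdot nd$, using $S\cong\kappa[x]/(x^n-c)$ from Proposition~\ref{prop:free}. The image of $L\otimes_{\F_q}S\to E$ is $L\cdot S$, which is contained in the image of $B_m$, which in turn lies in $R$. Hence (4) and the first half of (3) both reduce to the injectivity of $L\otimes_{\F_q}S\to E$.

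For this injectivity I would use the reduction isomorphism $\End(\phi)/\fp\End(\phi)\cong E$. By \eqref{eq:graded}, $\End(\phi)/\fp=\bigoplus_j (B/\fp B)\pi^j$, and $B/\fp B\cong\F_{q^n}\otimes_{\F_q}\kappa$. The elements $\zeta_a\,\phi_{T^k}\,\pi^j$, with $\zeta_a$ running over an $\F_q$-basis of $\F_{q^m}$, $0\le k<d$ and $0\le j<n$, are independent in this quotient, because the $\zeta_a$ stay $\F_q$-independent inside $\F_{q^n}\otimes\kappa$. These elements span the image of $L\otimes S$, so the map is injective, with image of dimension $mnd=\dim R$. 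This gives (4), and also shows that the image of $B_m$ equals $R$.

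The claim that ``$L$ is the constant part of $R$'' should then be read as $R\cap(\text{image of }\F_{q^n})=L$. If $\zeta\in\F_{q^n}$ commutes with $\pi^m$, then $\zeta^{[rm]}=\zeta$. Since $\gcd(r,n)=1$, this forces $\zeta\in\F_{q^{\gcd(rm,n)}}=\F_{q^m}$. Here we use that $\pi^m\zeta-\zeta\pi^m=(\zeta^{[rm]}-\zeta)\pi^m$, and that this element is nonzero in the quotient because $\pi^m$ is a basis element over $B/\fp B$ and a nonzero constant stays nonzero in $\F_{q^n}\otimes\kappa$.

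The main obstacle is this injectivity step: checking carefully that the constants together with $S$ remain independent after reduction modulo $\fp$, via the graded decomposition \eqref{eq:graded}.
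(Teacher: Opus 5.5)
Your proposal is correct and is essentially the paper's proof. Parts (1) and (2) are argued identically. For (3) and (4) you use the same ingredients: the decomposition $\End(\phi)/\fp\End(\phi)=\bigoplus_j(B/\fp B)\pi^j$, injectivity of reduction, and the count $\dim_{\F_q}R=mnd$. The only difference is the order: you prove injectivity of $L\otimes_{\F_q}S\to E$ directly, whereas the paper first shows that the image of $B_m$ has dimension $mnd$ because $B_m$ is an $A$-direct summand of $\End(\phi)$.

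One small imprecision: when $m=n$, $\pi^m=\phi_{\fp_0}$ reduces to the unit $c$ rather than to a basis element $\pi^j$. In that case $L=\F_{q^n}$, so the constant-part claim is trivial anyway.
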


\begin{proof}
(1) First, observe that $\kappa\subseteq R_0$ by definition.
Consequently an $R_0$-linear endomorphism of $M$ is automatically
$\kappa$-linear, and
\[
R=\Cent_E(R_0)=\End_{R_0}(M)
\]
is a tautology: centralizing $R_0$ inside the $\kappa$-linear maps is
the same as being $R_0$-linear. By Proposition~\ref{prop:free},
$M\cong R_0^{\oplus m}$, so $R\cong \End_{R_0}(M)\cong\Mat_m(R_0)$. Note that this implies 
\[\dim_{\F_q}R=m^2\cdot sd=mnd.\]

(2) Suppose $f\in E$ commutes with $R$. Since $R_0$ sits inside $R$ as
the scalar matrices, $f$ commutes with $R_0$, hence
$f\in\End_{R_0}(M)=R$ by (1). An element of $\Mat_m(R_0)$ central in
$\Mat_m(R_0)$ lies in $Z\bigl(\Mat_m(R_0)\bigr)=R_0$, as $R_0$ is
commutative. Thus, $\Cent_E(R)=R_0$.

(3) Choose the basis $\zeta_0,\dots,\zeta_{n-1}$ of
Lemma~\ref{lem:endfamily} so that $\zeta_0,\dots,\zeta_{m-1}$ is a
basis of $\F_{q^m}$ (possible as $m\mid n$). One checks from the
relations that
\[
B_m=\bigoplus_{i<m,\ j<n}A\,\zeta_i\pi^j. 
\]
Thus, $B_m$ is an $A$-module direct summand of $\End(\phi)$, so 
its image in $E=\End(\phi)/\fp\End(\phi)$ has $\F_q$-dimension
$mnd$. The image lies in $R$: the image of $\phi_T$ is a scalar in
$\kappa\subseteq R_0$; $\pi$ commutes with $R_0=\kappa[\pi^m]$; and
$\zeta\in\F_{q^m}$ commutes with $\pi^m$ (as $\zeta^{[rm]}=\zeta$) and
with the image of $A$ (as $\zeta^{[n]}=\zeta$). Comparing dimensions,
the image equals $R$. As for the constants, $\zeta\in\F_{q^n}$
centralizes $R_0$ if and only if it commutes with $\pi^m$, i.e.,
$\zeta^{[rm]}=\zeta$. Thus, 
\[\zeta\in\F_{q^n}\cap\F_{q^{rm}}=\F_{q^{\gcd(n,rm)}}=\F_{q^m},\] using
$\gcd(n,r)=1$ and $m\mid n$.

(4) The image of $B_m$ is spanned by the products
$\zeta\,\phi_a\,\pi^j$ with $\zeta\in L$, $a\in A$, i.e., equals $L\cdot S$, so
multiplication $L\otimes_{\F_q}S\to R$ is surjective. Since
\[\dim_{\F_q}(L\otimes_{\F_q}S)=m\cdot nd=\dim_{\F_q}R,\] it is
bijective.
\end{proof}

\section{Skew cyclic codes}\label{s:codes}

\begin{defn} With notation of the previous section, 
a \textit{skew cyclic code} (at the prime $\fp$, with parameter
$m\mid n$) is a left ideal $C\subseteq R$.
\end{defn}

\begin{prop}\label{prop:principal}
For every $m\mid n$ and every $\fp\neq\fp_0$, each left ideal of
$R\cong\Mat_m(R_0)$ is principal.
\end{prop}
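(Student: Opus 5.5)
We want to show that every left ideal of $R\cong\Mat_m(R_0)$ is principal. The approach is to use the Morita-type description of left ideals of a matrix ring over a commutative ring, together with the structure of $R_0\cong\kappa[u]/(u^s-c)$ from Proposition~\ref{prop:free}(2).

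First we describe left ideals. For a left ideal $C\subseteq\Mat_m(R_0)$, we let $N\subseteq R_0^{\oplus m}$, viewed as row vectors, be the set of all rows of elements of $C$. Left multiplication by the matrix units $e_{ij}$ moves any row of an element of $C$ into any row position and kills the other rows. Hence $N$ is an $R_0$-submodule and $C=\{X: \text{every row of }X\text{ lies in }N\}$. Equivalently, $C=\Hom_{R_0}(R_0^{\oplus m},N)$ acting on the right, or in intrinsic terms $C\cong\Hom_{R_0}(M,N')$ for a submodule $N'$.

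Second, we show such a $C$ is generated by one matrix. This holds as soon as $N$ is generated by at most $m$ elements $v_1,\dots,v_m$ (zeros allowed). Indeed, let $G$ be the matrix with rows $v_1,\dots,v_m$. A matrix $X$ has all its rows in $N$ exactly when each row of $X$ is an $R_0$-combination of the $v_k$, i.e.\ when $X=YG$ for some $Y\in\Mat_m(R_0)$. So $C=RG$.

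It therefore remains to check that every $R_0$-submodule of $R_0^{\oplus m}$ is generated by $m$ elements. This is the main obstacle, since $R_0$ need not be a field or even a product of fields when $u^s-c$ has repeated factors ($p\mid s$). We handle it via the Chinese remainder theorem. We factor $u^s-c=\prod_i g_i^{e_i}$ over $\kappa$, so that $R_0\cong\prod_i\kappa[u]/(g_i^{e_i})$. Submodules split compatibly, so it suffices to treat each factor $\kappa[u]/(g^{e})$ separately. That ring is a quotient of the PID $\kappa[u]$, and in fact a principal ideal local ring. A submodule $N$ of its free module of rank $m$ has a preimage $\tilde N$ in $\kappa[u]^m$, which is free of rank at most $m$ by the structure theorem over a PID. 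Hence $N$ is generated by at most $m$ elements, and by padding with zeros, exactly $m$ generators suffice on each factor. Taking the generators factorwise and combining them by CRT gives $m$ generators over $R_0$.

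Alternatively, one can skip the CRT step entirely: $R_0$ is a quotient of $\kappa[u]$, so any submodule of $R_0^{\oplus m}$ lifts to a submodule of $\kappa[u]^m$, which is free of rank $\le m$. This already gives $m$ generators directly, and we would present it this way for brevity.
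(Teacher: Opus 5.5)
Your proposal is correct and matches the paper's proof: you describe a left ideal by the $R_0$-submodule of its rows, take $G$ with rows generating that submodule so that $C=RG$, and lift to $\kappa[u]^{m}$ to get at most $m$ generators. The CRT factorization is an unnecessary detour. As you note yourself, the direct lifting argument suffices, and it is the one the paper uses.
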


\begin{proof}
Identify $M\cong R_0^{\oplus m}$ (columns) and $R\cong\Mat_m(R_0)$.
For a left ideal $C$, let $U(C)\subseteq R_0^{\,m}$ (rows) be the set
of all rows of all elements of $C$. From 
$(\alpha e_{ij})X\in C$ for all $X\in C$ and $\alpha\in R_0$, one sees that $U(C)$ is an $R_0$-submodule, and that
\[
C=\{X\in R:\ \text{every row of }X\text{ lies in }U(C)\}.
\]
Now $R_0\cong\kappa[u]/(u^s-c)$ is a quotient of the principal ideal
domain $\kappa[u]$, so $U(C)$, being the image of a submodule of
$\kappa[u]^{\,m}$ containing $(u^s-c)\kappa[u]^{\,m}$, is generated by
at most $m$ elements. Let $G\in R$ be a matrix whose rows are a set of
$m$ generators of $U(C)$ (repeating a generator if necessary). Then $RG$
consists exactly of the matrices whose rows lie in the $R_0$-span of
the rows of $G$, i.e.,\ $RG=C$.
\end{proof}

Let
\[
K\colonequals\text{the image of }
L\otimes_{\F_q}\kappa\ \text{ in }R
\]
under multiplication; by Proposition~\ref{prop:dc}(4) (restrict the
splitting to $\kappa\subseteq S$) this map is injective, so
$K\cong L\otimes_{\F_q}\kappa$ as rings, a commutative subring of $R$
containing $L$ and the central $\kappa$.

\begin{thm}\label{thm:presentation}
Assume $\gcd(m,d)=1$. Then:
\begin{enumerate}
\item $K\cong\F_{q^{md}}$ is a field.
\item Conjugation by $\pi$ preserves $K$ and induces the automorphism
$\sigma\in\Aut(K)$ acting as the identity on $\kappa$ and as
$\zeta\mapsto\zeta^{[r]}$ on $L$. Explicitly, $\sigma=\Frob^e|_K$, where
$\Frob\colon x\mapsto x^q$ denotes the $q$-power Frobenius automorphism,
and $e$ is determined by $e\equiv r\Mod m$, $e\equiv0\Mod d$. The
automorphism $\sigma$ has order $m$ and fixed field $\kappa$.
\item Let $K[y;\sigma]$ be the twisted polynomial ring with
$yx=\sigma(x)y$ for $x\in K$. The assignment $y\mapsto\pi$ induces an
isomorphism
\[
K[y;\sigma]/(y^n-c)\ \xrightarrow{\ \sim\ }\ R ,
\qquad R=\bigoplus_{i=0}^{n-1}K\pi^i ,
\]
and the modulus $y^n-c$ is central in $K[y;\sigma]$.
\end{enumerate}
\end{thm}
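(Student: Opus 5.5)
We will prove (1) by a tensor product computation, get (2) from the commutation relations in $\End(\phi)$, and get (3) from Proposition~\ref{prop:dc}(4) together with a dimension count.

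For (1), note that $K\cong L\otimes_{\F_q}\kappa\cong\F_{q^m}\otimes_{\F_q}\F_{q^d}$. Since $\gcd(m,d)=1$, the minimal polynomial over $\F_q$ of a generator of $\F_{q^m}$ has degree $m$. It stays irreducible over $\F_{q^d}$, because its roots generate $\F_{q^{md}}\cap\overline{\F}_q$ only over a degree-$m$ extension of $\F_{q^d}$. Hence the tensor product is the field $\F_{q^{md}}$. Concretely, $K$ is a commutative $\F_q$-algebra of dimension $md$ that contains the field $\F_{q^m}$ and the field $\kappa\cong\F_{q^d}$, and it is generated by them.

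For (2), we use the relations of Lemma~\ref{lem:endfamily}. In $E$ we have $\pi\zeta\pi^{-1}=\zeta^{[r]}$ for $\zeta\in L$; here $\pi|_M$ is invertible because $\pi^n=c\neq0$. Also $\pi$ commutes with $\phi_a$, so conjugation by $\pi$ fixes $\kappa$ pointwise. Therefore conjugation by $\pi$ preserves $K=L\cdot\kappa$ and acts on it as a ring automorphism $\sigma$. The automorphism $\Frob^e$ restricted to $K$ acts as $\zeta\mapsto\zeta^{q^e}=\zeta^{[r]}$ on $L$, since $e\equiv r\Mod m$. It acts trivially on $\kappa$, since $e\equiv0\Mod d$; such an $e$ exists by the Chinese remainder theorem. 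An automorphism of $K$ is determined by its values on the generating subfields, so $\sigma=\Frob^e|_K$.

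The order of $\sigma$ is $md/\gcd(e,md)$. Since $\gcd(r,m)=1$ (because $m\mid n$ and $\gcd(n,r)=1$), we get $\gcd(e,m)=1$. Combined with $d\mid e$ and $\gcd(m,d)=1$, this gives $\gcd(e,md)=d$. So $\sigma$ has order $m$, and its fixed field is $\F_{q^{\gcd(e,md)}}=\F_{q^d}=\kappa$.

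For (3), we first produce the ring map. Since $\pi K=\sigma(K)\pi$ inside $R$, the assignment $y\mapsto\pi$ together with $K\hookrightarrow R$ defines a ring homomorphism $K[y;\sigma]\to R$. Next we check that $y^n-c$ is central in $K[y;\sigma]$:
\begin{itemize}
\item $\sigma^n=\id$ because $m\mid n$, so $y^n$ commutes with $K$;
\item $c\in\kappa$ is fixed by $\sigma$, so $c$ commutes with $y$.
\end{itemize}
Hence $(y^n-c)$ is a two-sided ideal, and it lies in the kernel because $\pi^n|_M=c$ by Proposition~\ref{prop:nondero}(1). This gives an induced map $K[y;\sigma]/(y^n-c)\to R$.

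Surjectivity comes from Proposition~\ref{prop:dc}(4): $R=L\cdot S$ with $S=\sum_{i<n}\kappa\pi^i$, so $R=\sum_{i<n}K\pi^i$. For injectivity we compare dimensions. The quotient has $\F_q$-dimension $n\cdot md$, since $y^n-c$ is monic and right division gives the basis $1,y,\dots,y^{n-1}$ over $K$. On the other side, $\dim_{\F_q}R=mnd$ by Proposition~\ref{prop:dc}(1). So the surjection is an isomorphism, and in particular the sum $R=\bigoplus_{i<n}K\pi^i$ is direct.

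The main obstacle is mostly bookkeeping. One point is making sure that $\pi$ is invertible on $M$ and that conjugation by $\pi$ really preserves $K$; we get this from the relations in $\End(\phi)$ transported to $E$. The other is checking that the dimension count uses the correct $\dim_{\F_q}R=mnd$ from Proposition~\ref{prop:dc}.
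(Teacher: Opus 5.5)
Your proposal is correct and follows the paper's proof essentially step for step. You use the coprime-degree tensor product of finite fields for (1), the relation $\pi\zeta\pi^{-1}=\zeta^{[r]}$ together with the Chinese remainder theorem and $\gcd(e,md)=d$ for (2), and for (3) surjectivity from Proposition~\ref{prop:dc}(4) followed by the dimension count $mnd$. The only differences are cosmetic: you deduce that $R=\bigoplus_{i<n}K\pi^i$ is direct from the dimension count, rather than by first regrouping the splitting $L\otimes_{\F_q}S$; and your justification in (1) would read more cleanly as $[\F_{q^d}(\beta):\F_{q^d}]=\operatorname{lcm}(m,d)/d=m$ for a generator $\beta$ of $\F_{q^m}$.
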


\begin{proof}
(1) A tensor product of finite fields of coprime degrees over $\F_q$
is a field: \[\F_{q^m}\otimes_{\F_q}\F_{q^d}\cong\F_{q^{md}}  \text{ when }
\gcd(m,d)=1.\]

(2) The operator $\pi|_M$ is invertible ($\pi^n=c\in\kappa^\times$).
For $\zeta\in L$ (a scalar) and $v\in M$ one has
$\pi(\zeta v)=\zeta^{[r]}\pi(v)$, so $\pi\zeta\pi^{-1}=\zeta^{[r]}$. Next, 
$\pi$ commutes with $\kappa$, because being an endomorphism of $\phi$,
$\pi$ commutes with the action of $A$ on $\phi[\fp]$, through which
$\kappa$ acts, i.e.,  $\pi\in E$ and $\kappa=Z(E)$.
Hence conjugation by
$\pi$ preserves $K=L\cdot\kappa$ and acts as described. On
$K\cong\F_{q^{md}}$ every automorphism is a power of $\Frob$, and the
stated congruences (solvable by the Chinese remainder theorem, as
$\gcd(m,d)=1$) characterize the restriction to the two factors. Since
$m\mid n$ and $\gcd(n,r)=1$, we have $\gcd(m,r)=1$, so
$\gcd(e,m)=1$ and $\gcd(e,md)=d$. Therefore, $\sigma$ has
order $md/d=m$ and fixed field $\F_{q^d}=\kappa$.

(3) Regrouping the splitting of Proposition~\ref{prop:dc}(4) and using
$S=\bigoplus_{i<n}\kappa\pi^i$ (Proposition~\ref{prop:free}(1)),
\[
R\cong L\otimes_{\F_q}S
=(L\otimes_{\F_q}\kappa)\otimes_{\kappa}S
=\bigoplus_{i=0}^{n-1}K\pi^i .
\]
The assignment $y\mapsto\pi$ respects the defining relation
($\pi x=\sigma(x)\pi$ for $x\in K$ by (2)) and kills $y^n-c$, so it
induces a surjection $K[y;\sigma]/(y^n-c)\to R$ by the displayed
decomposition. Both sides have $\F_q$-dimension $md\cdot n=mnd$, so it
is an isomorphism. Finally, $y^n$ commutes with $K$
because $\sigma^n=\id$ ($m\mid n$), and $c\in\kappa$ is fixed by
$\sigma$; thus, $yc=cy$ and $y^n-c$ is central.
\end{proof}

\begin{rem}\label{rem:runiform}
	For $\fp_0=T$  (so $\phi_T=\tau^n$ over $\F_q$), the
	ring $R$ admits a second, finer presentation
	\[
	R\ \cong\ \F_{q^m}\{\tau\}/(\phi_\fp),
	\]
	valid for every $\fp$ and every $m\mid n$, with no condition on
	$\gcd(m,d)$: the image of $\F_{q^m}\{\tau\}$ in $E$ centralizes
	$R_0=\kappa[\tau^m]$, its kernel is $(\phi_\fp)$, and the dimensions
	agree. In this presentation the codes are the pseudo-$q$-cyclic codes of
	\cite{Gabidulin2009} and \cite[Rem.~1]{MP}, with central modulus
	$\phi_\fp$; see Section~\ref{s:matrix}.
\end{rem}

\begin{thm}\label{thm:generator}
Assume $\gcd(m,d)=1$ and identify $R=K[y;\sigma]/(y^n-c)$ as in
Theorem~\textup{\ref{thm:presentation}}. Let $C\subseteq R$ be a
nonzero left ideal. There is a unique monic $g\in K[y;\sigma]$ of
minimal $y$-degree whose class lies in $C$, and $C=Rg$. There is a
unique $h\in K[y;\sigma]$ with $y^n-c=hg=gh$. Moreover:
\begin{enumerate}
\item $f\in C$ if and only if $g$ divides $f$ on the right in
$K[y;\sigma]$. 
\item $\dim_{K}C=n-\deg_y g$, with basis $g,yg,\dots,y^{k-1}g$, where
$k=n-\deg_y g$. 
\item $f\in C$ if and only if $fh=0$ in $R$.
\end{enumerate}
\end{thm}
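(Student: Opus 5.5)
The plan is the standard generator/check-polynomial argument for skew polynomial rings over a field. It uses only that $K$ is a field (Theorem~\ref{thm:presentation}(1)), that $\sigma$ is an automorphism, and that $y^n-c$ is central (Theorem~\ref{thm:presentation}(3)). Write $\rho\colon K[y;\sigma]\to R$ for the quotient map. Since $\sigma$ is an automorphism and $K$ a field, $K[y;\sigma]$ has \emph{right} division with remainder: for any $f$ and nonzero $g$ there are $Q,\rho_0$ with $f=Qg+\rho_0$ and $\deg\rho_0<\deg g$. Indeed, to kill the leading term $a y^k$ of $f$ against the leading term $b y^j$ of $g$, subtract $a\,\sigma^{k-j}(b)^{-1}y^{k-j}g$.

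\textbf{Existence and characterization of $g$.} Let $\tilde C=\rho^{-1}(C)$. This is a left ideal of $K[y;\sigma]$ containing $y^n-c$, so it is nonzero. Take $g\in\tilde C$ nonzero of minimal degree and normalize it to be monic by left-multiplying by the inverse of its leading coefficient. Right division of any $f\in\tilde C$ by $g$ leaves a remainder in $\tilde C$ of smaller degree, which must therefore be $0$. Hence $\tilde C=K[y;\sigma]g$, which gives (1), and consequently $C=Rg$.

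Uniqueness follows because the difference of two monic minimal-degree elements lies in $\tilde C$ and has smaller degree, so it is $0$. Note that $\deg g\le n$, and that $\deg g<n$ as soon as $C$ contains a nonzero class of degree $<n$, which it does since $C\ne0$. The same division applied to $y^n-c\in\tilde C$ gives $y^n-c=hg$ with $h$ monic of degree $n-\deg g$; $h$ is unique by cancellation, since $K[y;\sigma]$ is a domain.

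\textbf{Commutation $gh=hg$.} This step uses centrality. Put $z=y^n-c$. From $z=hg$ we get $zh=hgh$, and since $z$ is central, $hz=hgh$ as well, i.e.\ $h(hg)=h(gh)$. Cancelling $h$ on the left in the domain gives $hg=gh$.

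\textbf{Dimension (2).} Every element of $R$ has a unique representative of degree $<n$ (Theorem~\ref{thm:presentation}(3)). By (1), the representatives lying in $C$ are exactly the products $Qg$ with $\deg Q<n-\deg g=k$. The elements $y^ig$ for $i<k$ have distinct degrees $\deg g+i<n$, so they are left $K$-independent in $R$ and span $C$. Hence $\dim_K C=k$.

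\textbf{Check polynomial (3).} If $f=Qg$ then $fh=Qgh=Qz=0$ in $R$. Conversely, suppose $fh\in (z)=K[y;\sigma]\,hg$, so $fh=P\,hg=P\,gh$. Cancelling $h$ on the right gives $f=Pg$, so $f\in C$ by (1).

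I expect the only delicate points to be the side conventions: we need right division, and the cancellation arguments rely on centrality of $y^n-c$. Since $K$ is a field, $K[y;\sigma]$ is a domain, so both go through.
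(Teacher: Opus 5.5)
Your proof is correct and follows essentially the same route as the paper: right Euclidean division in $K[y;\sigma]$, the monic minimal-degree generator of the preimage of $C$, the degree count for the basis $g,yg,\dots,y^{k-1}g$, and cancellation in the domain $K[y;\sigma]$ for the check polynomial. The differences are minor: you derive $gh=hg$ explicitly from the centrality of $y^n-c$, where the paper only asserts it, and for the converse of (3) you write $fh=P(y^n-c)=(Pg)h$ and cancel $h$ on the right, whereas the paper divides $f=ug+v$ and uses $\deg_y(vh)<n$ to force $v=0$.
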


\begin{proof}
The ring $K[y;\sigma]$ is left and right Euclidean with respect to
$\deg_y$ (see \cite{Ore}), and the arguments of
\cite[Cor.~3.1.15, 3.1.16]{PapikianBook}, stated there for twisted
polynomial rings with the $q$-power twist, apply verbatim to any
twisted polynomial ring over a field. Thus the preimage of $C$ in
$K[y;\sigma]$ is a principal left ideal containing the central
$y^n-c$, generated by the unique monic $g$ of minimal $y$-degree in
it, which gives the first claim and (1). Since $y^n-c$ is central
with right divisor $g$, there is a unique $h$ with $y^n-c=hg=gh$.

(2) The classes of $1,y,\dots,y^{n-1}$ form a $K$-basis of $R$. The
classes $y^ig$, $0\leq i\leq k-1$, have distinct $y$-degrees, hence are linearly independent over $K$. 
They span $C$, because any $f\in C$ equals $ug$ with
$\deg_y u<k$ by (1), so is a $K$-linear combination of the $y^ig$.

(3) If $f=ug$, then $fh=u(gh)=u(y^n-c)=0$ in $R$. Conversely, suppose
$fh=0$ and write $f=ug+v$ with $\deg_y v<\deg_y g$ by left division.
Then $vh\equiv0\pmod{y^n-c}$ while
$\deg_y(vh)<\deg_y g+\deg_y h=n$, so $vh=0$, and $v=0$ by
cancellation in the domain $K[y;\sigma]$. 
\end{proof}

\begin{rem}\label{rem:constacyclic}
	Under the coordinate map $K^{n}\to R$,
	\[(c_0,\dots,c_{n-1})\mapsto\sum c_iy^i,\] left multiplication by $y$ is
	the $\sigma$-twisted \emph{constacyclic} shift of length $n$ with
	feedback constant $c$:
	\[
	(c_0,\dots,c_{n-1})\longmapsto
	\bigl(c\,\sigma(c_{n-1}),\sigma(c_0),\dots,\sigma(c_{n-2})\bigr).
	\]
	These are exactly the \textit{$\theta$-codes} of \cite{ChaussadeLU} with
	central modulus $y^n-c$, over the alphabet $K\cong\F_{q^{md}}$, with
	$\theta=\sigma$ of order $m$. 
\end{rem}

\section{Root spaces and the lattice anti-isomorphism}\label{s:roots}
\begin{defn}
Motivated by the root spaces of \cite{MP}, we call an $R_0$-submodule
of $M$ a \emph{root space}. Equivalently, it is a
$\kappa$-subspace of $M$ stable under $\pi^m$.
\end{defn}

For a code (i.e., left ideal)
$C\subseteq R$ and a root space $W\subseteq M$, set
\begin{align*}
	Z(C)&=\{v\in M:\ f(v)=0\ \text{for all }f\in C\},\\
	\Ann(W)&=\{f\in R:\ f(W)=0\}.
\end{align*}
Observe that $Z(C)$ is an $R_0$-submodule of $M$ because
$R$ centralizes $R_0$. Also, $\Ann(W)$ is a left ideal for any subset
$W$. Thus, we get the maps
\[
\{\text{Left ideals of $R$}\}\ \ToFrom{Z}{\Ann}\
\{\text{$R_0$-submodules of $M$}\}
\]
\begin{thm}\label{thm:lattice} The above maps are
	mutually inverse bijections.
\end{thm}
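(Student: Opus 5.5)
We prove $Z(\Ann(W))=W$ for every $R_0$-submodule $W\subseteq M$ and $\Ann(Z(C))=C$ for every left ideal $C\subseteq R$. The inclusions $W\subseteq Z(\Ann(W))$ and $C\subseteq\Ann(Z(C))$ are formal. We work throughout in the identification $M\cong R_0^{\oplus m}$ (columns) and $R\cong\Mat_m(R_0)$ of Propositions~\ref{prop:free} and~\ref{prop:dc}(1).

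\textbf{Codes are determined by row modules.} By the proof of Proposition~\ref{prop:principal}, a left ideal $C$ is determined by its row module $U=U(C)\subseteq R_0^{m}$ (rows): $C$ consists of the matrices all of whose rows lie in $U$. Moreover $U\mapsto C_U$ is a bijection between $R_0$-submodules of $R_0^m$ (rows) and left ideals. For $v\in M$ (a column), $Xv=0$ holds exactly when every row $u$ of $X$ satisfies $u\cdot v=0$. Consider the perfect $R_0$-bilinear pairing
\[
R_0^m\times R_0^m\to R_0,\qquad (u,v)\mapsto u\cdot v.
\]
Then $Z(C)=U^\perp$ and $\Ann(W)=C_{W^\perp}$, where $W^\perp=\{u: u\cdot W=0\}$. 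The theorem is therefore equivalent to the double-orthogonal identity
\[
(U^\perp)^\perp=U \quad\text{and}\quad (W^\perp)^\perp=W
\]
for all $R_0$-submodules of $R_0^m$.

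\textbf{Reduction to a $\kappa$-valued pairing.} This is the main step, and it uses that $R_0\cong\kappa[u]/(u^s-c)$ is Gorenstein, in fact a Frobenius algebra. Let $\lambda\colon R_0\to\kappa$ be the $\kappa$-linear functional taking the coefficient of $u^{s-1}$ in the basis $1,u,\dots,u^{s-1}$. Then $(a,b)\mapsto\lambda(ab)$ is a nondegenerate $\kappa$-bilinear form on $R_0$. To check this, take $a\neq0$ and let $u^j$ be the lowest power appearing in $a$. Multiplying by $u^{s-1-j}$ and reducing with $u^s=c$ gives $\lambda(au^{s-1-j})=a_j\neq0$ plus higher terms, each of which picks up a factor $c$ and lands in lower degrees. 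To make this rigorous, I would note that the form has an invertible Hankel-type Gram matrix, or argue that the nonzero ideal $R_0a$ contains a nonzero element on which $\lambda$ is nonzero, since every nonzero ideal contains the socle, which is all of $R_0$ if $u$ is invertible.

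Consequently $\langle u,v\rangle=\lambda(u\cdot v)$ is a nondegenerate $\kappa$-bilinear form on the finite-dimensional $\kappa$-space $R_0^m$. For an $R_0$-submodule $U$, the $\kappa$-orthogonal $\{v:\lambda(u\cdot v)=0\ \forall u\in U\}$ equals $U^\perp$. Indeed, if $\lambda(u\cdot v)=0$ for all $u\in U$, then $\lambda(a(u\cdot v))=0$ for all $a\in R_0$, because $au\in U$; nondegeneracy on $R_0$ then gives $u\cdot v=0$. Linear algebra over $\kappa$ gives $\dim_\kappa U^\perp=\dim_\kappa R_0^m-\dim_\kappa U$, hence $(U^\perp)^\perp=U$. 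The same argument gives $(W^\perp)^\perp=W$.

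\textbf{Conclusion.} We obtain $Z(\Ann(W))=(W^\perp)^\perp=W$ and $\Ann(Z(C))=C_{(U^\perp)^\perp}=C_U=C$. No separability is used, since the argument relies only on the Frobenius form on $R_0$.
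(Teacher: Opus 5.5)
Your proof is correct and is essentially the paper's argument: you pass to the row module $U(C)$, express $Z(C)$ and $U(\Ann(W))$ as annihilators under the pairing $R_0^m\times R_0^m\to R_0$, and conclude by double orthogonality for the nondegenerate $\kappa$-form $\lambda(u\cdot v)$, with $\lambda$ the coefficient of $u^{s-1}$, using that $uW$ (resp.\ $Uv$) is an ideal of $R_0$. Your lowest-term computation is already rigorous, so no further justification is needed (the paper uses the top term $x_e$ instead, which avoids reducing by $u^s=c$); you should drop the socle remark, which is false when $p\mid s$: for example $R_0=\kappa[u]/((u-1)^p)$ has socle $((u-1)^{p-1})\neq R_0$ even though $u$ is a unit.
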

\begin{proof}
	Let $C\subseteq R$ be a left ideal. Let $U_C\subseteq R_0^m$ (rows) be the set of all rows of all elements of $C$.
	As we explained in the proof of Proposition~\ref{prop:principal},
	$U_C$ is an $R_0$-submodule of $R_0^m$. Now
	\begin{align*}
		Z(C) &= \{x\in M:\ U_C x = 0\}, \\
		U_{\Ann(W)} &= \{u\in R_0^m:\ u W = 0\},
	\end{align*}
	where the notation indicates the row-column pairing being identically zero.
	Now the claim of the theorem becomes the assertion that the pairing
	\[
	R_0^m\times R_0^m\To R_0, \qquad (u, x)\longmapsto \sum u_ix_i
	\]
	induces mutually inverse, inclusion reversing bijections between the $R_0$-submodules
	of row and column spaces. This holds because $R_0=\kappa[u]/(u^s-c)$
	is a \textit{Frobenius}, or \textit{Gorenstein}, algebra; cf.
	\cite[\S16]{Lam}. Explicitly, let $\lambda\colon R_0\To\kappa$ pick
	the coefficient of $u^{s-1}$ in the basis $1,u,\dots,u^{s-1}$. The
	kernel of $\lambda$ contains no nonzero ideal, since for
	$0\neq x=\sum_{i\leq e}x_iu^i$ with $x_e\neq0$ one has
	$\lambda(u^{s-1-e}x)=x_e\neq0$. Equivalently, the bilinear form
	$(a,b)\mapsto\lambda(ab)$ on $R_0$ is nondegenerate, or again,
	$a\mapsto\lambda(\,\cdot\,a)$ is an isomorphism
	$R_0\cong\Hom_\kappa(R_0,\kappa)$ of $R_0$-modules. Consequently
	the $\kappa$-bilinear form
	\[
	B(u,x)=\lambda\Bigl(\sum_{i=1}^m u_ix_i\Bigr)
	\]
	on $R_0^m\times R_0^m$ is nondegenerate. For an $R_0$-submodule $W$
	of the column space and a row $u$, the set
	$uW=\{\sum_i u_ix_i:\ x\in W\}$ is an ideal of $R_0$, so
	$\lambda(uW)=0$ if and only if $uW=0$. Hence $\{u:\ uW=0\}$ is the
	orthogonal complement $W^\perp$ of $W$ with respect to $B$, and
	likewise $\{x:\ Ux=0\}=U^\perp$ for a submodule $U$ of the row
	space. The two maps are therefore double orthogonal complements with
	respect to a nondegenerate form on a finite-dimensional
	$\kappa$-space, and $W^{\perp\perp}=W$, $U^{\perp\perp}=U$.
\end{proof}
	
\begin{cor}\label{cor:lattice} The maps $\Ann$ and $Z$ have the following properties.
	\begin{enumerate}
		\item The maps $\Ann$ and $Z$ are anti-isomorphisms of lattices:
		\begin{align*}
			Z(C_1+C_2) &=Z(C_1)\cap Z(C_2),\\  Z(C_1\cap C_2) &=Z(C_1)+Z(C_2), \\ 
			\Ann(W_1+W_2) &=\Ann(W_1)\cap \Ann(W_2),\\  \Ann(W_1\cap W_2) &=\Ann(W_1)+\Ann(W_2). 
		\end{align*}
		\item For every root space $W$,
		\[
		\dim_\kappa\Ann(W)=m\,\bigl(n-\dim_\kappa W\bigr).
		\]
		If $\gcd(m,d)=1$ and $C=Rg$ with $g$ the minimal monic generator
		of Theorem~\textup{\ref{thm:generator}}, then 
		\[
		\dim_{\kappa}Z(C)=\deg_y g=n-\dim_{K}C .
		\]
	\end{enumerate}
\end{cor}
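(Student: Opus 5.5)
For part (1), I will use Theorem~\ref{thm:lattice} and the fact that both maps reverse inclusions. $Z$ and $\Ann$ are mutually inverse, order-reversing bijections between the lattice of left ideals of $R$ (meet $\cap$, join $+$) and the lattice of $R_0$-submodules of $M$ (meet $\cap$, join $+$). Any order-reversing bijection between lattices whose inverse is also order-reversing exchanges meets and joins. So all four identities follow formally. The identities $Z(C_1+C_2)=Z(C_1)\cap Z(C_2)$ and $\Ann(W_1+W_2)=\Ann(W_1)\cap\Ann(W_2)$ are also immediate from the definitions. The other two then follow by applying the inverse map. For instance,
\[
\Ann\bigl(Z(C_1)+Z(C_2)\bigr)=\Ann Z(C_1)\cap \Ann Z(C_2)=C_1\cap C_2,
\]
and applying $Z$ gives $Z(C_1\cap C_2)=Z(C_1)+Z(C_2)$. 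The identity $\Ann(W_1\cap W_2)=\Ann(W_1)+\Ann(W_2)$ follows symmetrically.

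For the dimension formula in (2), I will reuse the row/column description from the proof of Theorem~\ref{thm:lattice}. A left ideal $C=\Ann(W)$ consists of the matrices in $\Mat_m(R_0)$ whose $m$ rows all lie in $U_C=W^\perp$. Hence $\dim_\kappa C=m\dim_\kappa W^\perp$. The form $B$ is nondegenerate on the $\kappa$-space $R_0^m$, which has dimension $ms=n$. Therefore $\dim_\kappa W^\perp=n-\dim_\kappa W$, which gives $\dim_\kappa\Ann(W)=m(n-\dim_\kappa W)$. The one point to check is that the identification $M\cong R_0^{m}$ preserves $\kappa$-dimension, and this is clear from Proposition~\ref{prop:free}.

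For the last claim, assume $\gcd(m,d)=1$. By Theorem~\ref{thm:generator}(2), $\dim_K C=n-\deg_y g$. Since $K\cong\F_{q^{md}}$ has $\kappa$-dimension $m$, this gives $\dim_\kappa C=m(n-\deg_y g)$. Now put $W=Z(C)$, so that $C=\Ann(W)$ by Theorem~\ref{thm:lattice}. The formula just proved gives $\dim_\kappa C=m(n-\dim_\kappa Z(C))$. Comparing the two expressions yields $\dim_\kappa Z(C)=\deg_y g=n-\dim_K C$.

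The only step that needs any care is the dimension count for $W^\perp$, that is, confirming that nondegeneracy of $B$ as a $\kappa$-bilinear form gives the complementary dimension. This is standard linear algebra over $\kappa$.
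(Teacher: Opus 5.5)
Your proposal is correct. Part (1) and the final claim of (2) follow the paper's proof essentially verbatim. The paper phrases the first and third identities through the equivalence $C\subseteq\Ann(W)\iff W\subseteq Z(C)$, which is the same observation as your order-reversal argument.

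For the formula $\dim_\kappa\Ann(W)=m\,(n-\dim_\kappa W)$ you take a genuinely different, coordinate-level route:
\begin{enumerate}
\item You note that $X\in\Ann(W)$ exactly when every row $u$ of $X$ satisfies $uW=0$, so $\dim_\kappa\Ann(W)=m\dim_\kappa W^\perp$.
\item You reuse the fact, already established inside the proof of Theorem~\ref{thm:lattice}, that $\{u:\ uW=0\}$ is the orthogonal of $W$ for the nondegenerate form $B$.
\item Since $B$ pairs two $\kappa$-spaces of dimension $ms=n$, this gives $\dim_\kappa W^\perp=n-\dim_\kappa W$.
\end{enumerate}

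The paper instead argues without coordinates. It writes $\Ann(W)\cong\Hom_{R_0}(M/W,M)\cong\Hom_{R_0}(M/W,R_0)^{\oplus m}$. It then combines the self-duality $R_0\cong\Hom_\kappa(R_0,\kappa)$ with the Hom adjunction to get $\dim_\kappa\Hom_{R_0}(N,R_0)=\dim_\kappa N$.

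The underlying input is the same in both: the Frobenius (Gorenstein) property of $R_0$. Your version packages it as nondegeneracy of $B$, the paper's as $R_0\cong\Hom_\kappa(R_0,\kappa)$. Your route is shorter and entirely elementary once the machinery of Theorem~\ref{thm:lattice} is in place. The paper's route is basis-free and makes visible that only ``$R_0$ Gorenstein and $M$ free over $R_0$'' is used. That suits the axiomatic formulation (A1) of Section~\ref{ss:axioms}.
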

\begin{proof}
	(1) Both maps are inclusion-reversing, directly from the
	definitions, and for any code $C$ and root space $W$ one has 
	\[
	C\subseteq\Ann(W)\iff W\subseteq Z(C),
	\]
	since both containments state that $f(v)=0$ for all $f\in C$,
	$v\in W$. Hence, for every root space $W$,
	\begin{align*}
	W\subseteq Z(C_1+C_2)&\iff C_1+C_2\subseteq\Ann(W) \\ 
	& \iff C_1,C_2\subseteq\Ann(W)\iff W\subseteq Z(C_1)\cap Z(C_2);
	\end{align*}
	taking $W$ to be each of the two root spaces $Z(C_1+C_2)$ and
	$Z(C_1)\cap Z(C_2)$ gives the first identity, and the third is
	proved symmetrically. The remaining two follow from these by
	Theorem~\ref{thm:lattice}: applying $\Ann$ to the first identity
	with $C_i=\Ann(W_i)$, and using $Z(\Ann(W_i))=W_i$ and
	$\Ann(Z(C))=C$, yields
	$\Ann(W_1\cap W_2)=\Ann(W_1)+\Ann(W_2)$; similarly for
	$Z(C_1\cap C_2)$.
	
	(2) A map $f\in R=\End_{R_0}(M)$ vanishes on $W$ if and only if it
	factors through $M/W$, so
	\[\Ann(W)\cong\Hom_{R_0}(M/W,M)\cong\Hom_{R_0}(M/W,R_0)^{\oplus m}.\]
	For any finite $R_0$-module $N$, the Gorenstein isomorphism and
	the adjunction
	\[
	\Hom_{R_0}\bigl(N,\Hom_\kappa(R_0,\kappa)\bigr)\cong
	\Hom_\kappa(N,\kappa)
	\]
	give $\dim_\kappa\Hom_{R_0}(N,R_0)=\dim_\kappa N$. Taking
	$N=M/W$, of $\kappa$-dimension $n-\dim_\kappa W$, yields
	$\dim_\kappa\Ann(W)=m\,(n-\dim_\kappa W)$.
	
	Now let $\gcd(m,d)=1$ and $C=Rg$. Then $Z(C)=Z(g)$, and
	$C=\Ann(Z(C))$ by Theorem~\ref{thm:lattice}. Since $K\subseteq R$,
	the left ideal $C$ is a $K$-subspace, so
	$\dim_\kappa C=m\dim_KC=m\,(n-\deg_yg)$ by
	Theorem~\ref{thm:generator}(2). Comparing with
	$\dim_\kappa C=m\,(n-\dim_\kappa Z(C))$
	gives $\dim_\kappa Z(C)=\deg_yg=n-\dim_KC$.
\end{proof}

\section{The rank metric and the bounds}\label{s:bounds}

Throughout this section, the standing assumptions are those fixed in
Section~\ref{s:module}: $\phi_T=t+\tau^n$ is supersingular, so
$\gcd(n,r)=1$, the prime $\fp\neq\fp_0$ of degree $d$ is arbitrary, and
$m\mid n$ is arbitrary, with $s=n/m$. We emphasize that $\gcd(m,d)=1$ is
\emph{not} assumed: the results below rest on the splitting of
Proposition~\ref{prop:dc}(4) and the duality of
Corollary~\ref{cor:lattice}, not on the presentation of
Theorem~\ref{thm:presentation}.

\subsection{The metric as extra structure}\label{ss:metric}
Sections~\ref{s:codes}--\ref{s:roots} used only the pair $(R,R_0)$
acting on $M$. The rank metric is \emph{not} intrinsic to this data:
it depends on the splitting of Proposition~\ref{prop:dc}(4). Fix an
$\F_q$-basis $(s_j)_{1\leq j\leq nd}$ of $S$; by bijectivity of
$L\otimes_{\F_q}S\to R$, every $f\in R$ has a unique expansion
$f=\sum_jb_js_j$ with $b_j\in L$. Define the \emph{support}
$V(f)=\langle b_1,\dots,b_{nd}\rangle_{\F_q}\subseteq L$. Now put 
\begin{equation}\label{eq:wtmin}
\wtR(f)=\dim_{\F_q}V(f)
=\min\{\dim_{\F_q}V:\ V\subseteq L,\ f\in V\cdot S\},
\end{equation}
so that $V(f)$, and hence the weight, do not depend on the chosen
basis.

\begin{proof}[Proof of the second equality in \eqref{eq:wtmin}]
If $f\in V\cdot S$, write $f=\sum_kv_kt_k$ with $v_k\in V$,
$t_k\in S$, and expand each $t_k$ in the basis $(s_j)$. Comparing the
$L$-coefficients of $s_j$ in the two expansions of $f$ shows
$b_j\in V$ for all $j$, i.e., $V\supseteq V(f)$. Conversely,
$f\in V(f)\cdot S$ by definition. Hence the minimum is attained at
$V=V(f)$.
\end{proof}

\begin{defn}
For a code $C$ we put \[\dR(C)=\min\{\wtR(f):0\neq f\in C\}.\] Note
$\wtR(f)\leq m$ always.
\end{defn}

\subsection{Restricted Moore nondegeneracy}\label{ss:moore}

Full nonsingularity of $q^r$-Moore matrices of $\F_q$-independent
elements is \emph{false} in this setting
(Remark~\ref{rem:moorefalse}). What we prove in this subsection 
is a ``restricted" Moore nondegeneracy (Theorem~\ref{thm:moorefamily}), which is sufficient for the proof of BCH-type bounds 
for the metric introduced above. 

We need two preliminary lemmas. 

\begin{lem}\label{lem:kernelrat}
Let $\gamma_1,\dots,\gamma_w\in\overline\F_q$ and let
\[
K_0=\Bigl\{\vec{c}\in\F_{q^r}^{\,w}:\ \sum_{k=1}^w c_k\gamma_k=0\Bigr\}
\]
be the space of $\F_{q^r}$-rational relations. The solution space $V$ in
$\overline\F_q^{\,w}$ of the system of linear equations 
\[
\sum_{k=1}^{w}x_k\gamma_k^{[ri]}=0,\qquad 0\leq i\leq w-1,
\]
is $\overline\F_q\otimes_{\F_{q^r}}K_0$.
\end{lem}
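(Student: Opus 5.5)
The inclusion $\overline\F_q\otimes_{\F_{q^r}}K_0\subseteq V$ is immediate. If $\vec c\in K_0$, then raising $\sum_k c_k\gamma_k=0$ to the power $q^{ri}$ gives $\sum_k c_k\gamma_k^{[ri]}=0$, because $c_k^{[ri]}=c_k$ for $c_k\in\F_{q^r}$. Hence every element of $K_0$ solves the system. Since $V$ is an $\overline\F_q$-subspace, it contains the $\overline\F_q$-span of $K_0$. That span is identified with $\overline\F_q\otimes_{\F_{q^r}}K_0$, because $\F_{q^r}$-linearly independent vectors in $\F_{q^r}^w$ stay $\overline\F_q$-linearly independent.

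For the reverse inclusion I would use Galois descent. Let $\theta$ be the $q^r$-power map on $\overline\F_q$, acting coordinatewise on $\overline\F_q^{\,w}$. The first step is to show that $V$ is $\theta$-stable. I would enlarge the system to all $i\geq0$ and show this changes nothing. Let $N$ be the $\overline\F_q$-span of the vectors $v_i=(\gamma_1^{[ri]},\dots,\gamma_w^{[ri]})$ for $i\geq0$. Then $\theta(v_i)=v_{i+1}$, so $N$ is $\theta$-stable. Its orthogonal complement under the standard pairing $\sum x_ky_k$ is then also $\theta$-stable: if $x\perp N$, applying $\theta$ to $\sum_k x_k\gamma_k^{[ri]}=0$ gives $\theta(x)\perp v_{i+1}$ for all $i$. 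It also satisfies $\theta(x)\perp v_0$, since $v_0=\theta(v_{j})$ for suitable $j$ where $\gamma$ is periodic under $\theta$. To avoid periodicity subtleties, one can simply note that $\theta(N)\subseteq N$ with $\theta$ injective and semilinear forces $\theta(N)$ to span $N$.

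Next, the span $N$ is already generated by $v_0,\dots,v_{w-1}$. Let $N_j$ be the span of $v_0,\dots,v_j$. If $v_j\in N_{j-1}$, write $v_j=\sum_{i<j}a_iv_i$ and apply $\theta$. This gives $v_{j+1}\in N_j=N_{j-1}$, and by induction all later $v_i$ lie in $N_{j-1}$. So the dimensions of $N_j$ strictly increase until they stabilize. Since they are bounded by $w$, stabilization occurs by $j=w-1$. Therefore $V=N^\perp$ is $\theta$-stable.

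The main step is descent. A $\theta$-stable $\overline\F_q$-subspace of $\overline\F_q^{\,w}$ is spanned by its $\theta$-fixed vectors, which lie in $\F_{q^r}^{\,w}$. This is the standard Hilbert 90 / Lang descent for the Frobenius semilinear map. Concretely, $V$ is defined over a finite field $\F_{q^{rN}}$, and one applies Galois descent for the cyclic group generated by $\theta$ on $\F_{q^{rN}}/\F_{q^r}$: $V\cap\F_{q^{rN}}^{\,w}$ is $\Gal$-stable, hence spanned by its invariants. To see that $V$ is defined over a finite field, take a reduced row echelon basis of $N$, which is unique and therefore $\theta$-fixed, so its entries lie in $\F_{q^r}$. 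This echelon argument is cleaner: $V$ is then cut out by equations with coefficients in $\F_{q^r}$, so it is spanned by $V\cap\F_{q^r}^{\,w}$. Finally, $V\cap\F_{q^r}^{\,w}=K_0$, since the $i=0$ equation is exactly the defining relation of $K_0$. This gives $V\subseteq\overline\F_q\otimes K_0$.

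The obstacle is purely the $\theta$-stability and descent bookkeeping. The echelon-form uniqueness trick handles it without invoking any general theorem.
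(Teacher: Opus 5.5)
Your proof is correct, but it takes a different route from the paper's.

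\textbf{The paper's route.} The paper picks an $\F_{q^r}$-basis $\eta_1,\dots,\eta_e$ of $\langle\gamma_1,\dots,\gamma_w\rangle_{\F_{q^r}}$ and writes $\gamma_k=\sum_j c_{jk}\eta_j$ with $C=(c_{jk})$ over $\F_{q^r}$. It then factors the coefficient matrix as $\bigl(\eta_j^{[ri]}\bigr)_{i,j}\cdot C$. The top $e\times e$ block of the first factor is a $q^r$-Moore matrix of $\F_{q^r}$-independent elements, so Moore's theorem forces $C\vec{b}=0$. This gives $V=\ker_{\overline\F_q}(C)=\overline\F_q\otimes_{\F_{q^r}}K_0$.

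\textbf{Your route.} You avoid Moore's theorem entirely, in two steps.
\begin{enumerate}
\item The Krylov-type stabilization (if $v_j\in N_{j-1}$, then every later $v_i\in N_{j-1}$) shows that the first $w$ equations already cut out $N^\perp$, where $N$ is the full $\theta$-stable span.
\item Uniqueness of the reduced row echelon basis of $N$ shows that $N$, hence $V=N^\perp$, is defined over $\F_{q^r}$. Then $V\cap\F_{q^r}^{\,w}=K_0$ finishes the argument.
\end{enumerate}
In effect these two steps reprove the relevant case of Moore's theorem by Frobenius descent.

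\textbf{Comparison.} Your argument is self-contained and explains structurally why exactly $w$ equations suffice. The paper's is shorter, given the cited theorem, and yields the explicit description $V=\ker_{\overline\F_q}(C)$.

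\textbf{A simplification.} The echelon step only uses $\theta(N)\subseteq N$. So your separate check that $V$ itself is $\theta$-stable, including the aside about periodicity of the $\gamma_k$, can be dropped.
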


\begin{proof}
	The solution space $V$ is an $\oF_q$-subspace of $\oF_q^{\,w}$, so
	for the inclusion $\oF_q\otimes_{\F_{q^r}}K_0\subseteq V$ it
	suffices to show $K_0\subseteq V$. For
	$\vec{c}=(c_1,\dots,c_w)\in K_0$ one has $c_k^{[ri]}=c_k$ for all
	$k$ and all $i\geq0$, so, for $0\leq i\leq w-1$,
	\[
	\sum_{k=1}^{w}c_k\gamma_k^{[ri]}
	=\left(\sum_{k=1}^{w}c_k\gamma_k\right)^{[ri]}=0 .
	\]
	
	To prove the reverse inclusion, let
	$e=\dim_{\F_{q^r}}\langle\gamma_1,\dots,\gamma_w\rangle_{\F_{q^r}}$,
	choose an $\F_{q^r}$-basis $\eta_1,\dots,\eta_e$ of the span, and
	write $\gamma_k=\sum_{j=1}^{e}c_{jk}\eta_j$, $1\leq k\leq w$, with
	$C=(c_{jk})\in\Mat_{e\times w}(\F_{q^r})$. First observe that
	$K_0=\ker_{\F_{q^r}}(C)$: for $\vec{c}\in\F_{q^r}^{\,w}$,
	\[
	\sum_{k=1}^{w}c_k\gamma_k
	=\sum_{j=1}^{e}\left(\sum_{k=1}^{w}c_{jk}c_k\right)\eta_j
	=\sum_{j=1}^{e}(C\vec{c})_j\,\eta_j ,
	\]
	which vanishes if and only if $C\vec{c}=0$, by independence of the
	$\eta_j$. Consequently
	\[
	\ker_{\oF_q}(C)=\oF_q\otimes_{\F_{q^r}}K_0. 
	\]
	
	Since the entries of $C$ are fixed by $\tau^r$, the coefficient
	matrix of the system factors as
	\[
	\bigl(\gamma_k^{[ri]}\bigr)_{i, k}
	=\bigl(\eta_j^{[ri]}\bigr)_{i, j}
	\cdot C .
	\]
	Now let $\vec{b}\in V$. Then
	$\bigl(\eta_j^{[ri]}\bigr)_{i, j}$ kills
	the vector $C\vec{b}\in\oF_q^{\,e}$. The top $e\times e$ block of
	this matrix (note $w\geq e$) is the $q^r$-Moore matrix of the
	$\F_{q^r}$-independent elements $\eta_1,\dots,\eta_e$, nonsingular
	by Moore's theorem \cite[Thm.~3.1.18]{PapikianBook}, applied with
	$q^r$ in place of $q$. Thus $C\vec{b}=0$, that is,
	$\vec{b}\in\ker_{\oF_q}(C)=\oF_q\otimes_{\F_{q^r}}K_0$. This proves
	$V\subseteq\oF_q\otimes_{\F_{q^r}}K_0$.
\end{proof}

\begin{lem}\label{lem:pushdown}
Assume $\gcd(m,r)=1$, and let $K_0\subseteq\F_{q^r}^{\,w}$ be an
$\F_{q^r}$-subspace. If
\[(\overline\F_q\otimes_{\F_{q^r}}K_0)\cap\F_{q^m}^{\,w}\neq0,\] then
$K_0\cap\F_q^{\,w}\neq0$.
\end{lem}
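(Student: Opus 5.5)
The plan is a Galois descent argument in two steps. Put $W=\overline\F_q\otimes_{\F_{q^r}}K_0\subseteq\overline\F_q^{\,w}$, the $\overline\F_q$-span of $K_0$, and put
\[
Y\colonequals W\cap\F_{q^m}^{\,w}.
\]
By hypothesis $Y\neq0$. We will show that $Y$ descends to $\F_q$, and then that its $\F_q$-rational points lie in $K_0$.

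\emph{Step 1: $W\cap\F_{q^r}^{\,w}=K_0$.} As in the proof of Lemma~\ref{lem:kernelrat}, choose a matrix $C$ with entries in $\F_{q^r}$ such that $K_0=\ker_{\F_{q^r}}(C)$. For instance, take rows spanning the orthogonal complement of $K_0$. Then $W=\ker_{\overline\F_q}(C)$, since the kernel of a matrix commutes with base field extension. The $\F_{q^r}$-rational points of $\ker_{\overline\F_q}(C)$ are exactly $\ker_{\F_{q^r}}(C)=K_0$, which gives the claim.

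\emph{Step 2: $Y$ is stable under the coordinatewise $q$-power Frobenius.}
\begin{itemize}
\item $W$ is stable under $x\mapsto x^{[r]}$ applied coordinatewise. Indeed, $W$ is spanned over $\overline\F_q$ by vectors of $K_0$, which are fixed by $\tau^r$, and $\tau^r$ is semilinear.
\item $\F_{q^m}^{\,w}$ is stable under every power of Frobenius.
\end{itemize}
Hence $Y$ is an $\F_{q^m}$-subspace stable under $\tau^r$. Because $\gcd(m,r)=1$, the restriction of $\tau^r$ to $\F_{q^m}$ generates $\Gal(\F_{q^m}/\F_q)$. It follows that $Y$ is stable under the whole Galois group, acting coordinatewise.

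\emph{Step 3: descent.} A Galois-stable $\F_{q^m}$-subspace $Y\subseteq\F_{q^m}^{\,w}$ satisfies
\[
Y=\F_{q^m}\otimes_{\F_q}\bigl(Y\cap\F_q^{\,w}\bigr).
\]
This is Speiser's lemma, or Hilbert 90 for $\mathrm{GL}$. A direct proof goes through the reduced row echelon basis of $Y$: that basis is unique, so it is Galois-fixed and therefore has entries in $\F_q$. Since $Y\neq0$, we get $Y\cap\F_q^{\,w}\neq0$.

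Finally, $Y\cap\F_q^{\,w}\subseteq W\cap\F_{q^r}^{\,w}=K_0$ by Step 1. Therefore $K_0\cap\F_q^{\,w}\neq0$.

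The only point needing care is Step 2, namely that $\tau^r$-stability suffices. This is exactly where the coprimality $\gcd(m,r)=1$ enters. Everything else is standard linear-algebra descent via echelon forms.
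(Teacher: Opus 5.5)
Your proof is correct and follows essentially the same Galois-descent route as the paper: the coprimality of $m$ and $r$ upgrades stability under $\tau^r$ (equivalently, under $\Gal(\overline\F_q/\F_{q^r})$) to stability under the full Galois group, descent then yields a nonzero $\F_q$-rational vector, and the identity $(\overline\F_q\otimes_{\F_{q^r}}K_0)\cap\F_{q^r}^{\,w}=K_0$ places that vector in $K_0$. The differences are minor: the paper descends the $\overline\F_q$-span of the $\Gal(\overline\F_q/\F_{q^r})$-orbit of a single vector, whereas you descend the finite-level space $Y$ over $\F_{q^m}/\F_q$ with an explicit echelon-form argument, and you also prove the final intersection identity, which the paper only asserts.
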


\begin{proof}
	Let $\vec{b}\neq0$ lie in the intersection and let
	\[
	U\colonequals\bigl\langle\,\rho(\vec{b}):\ \rho\in
	\Gal(\oF_q/\F_{q^r})\,\bigr\rangle_{\oF_q},
	\]
	the Galois group acting coordinatewise. Since
	$\oF_q\otimes_{\F_{q^r}}K_0$ has a basis with entries in $\F_{q^r}$, it
	is stable under $\Gal(\oF_q/\F_{q^r})$, so
	$U\subseteq\oF_q\otimes_{\F_{q^r}}K_0$. The coordinates of $\vec{b}$ lie
	in $\F_{q^m}$, and, because $\F_{q^m}\cap\F_{q^r}=\F_q$, the restriction
	map $\Gal(\oF_q/\F_{q^r})\to\Gal(\F_{q^m}/\F_q)$ is surjective; hence
	the set $\{\rho(\vec{b})\}$ is the full
	$\Gal(\F_{q^m}/\F_q)$-orbit of $\vec{b}$. It follows that $U$ is stable
	under all of $\Gal(\oF_q/\F_q)$: it is stable under
	$\Gal(\oF_q/\F_{q^r})$ by construction, and for
	$\theta\in\Gal(\oF_q/\F_{q^m})$ each $\theta\rho(\vec{b})$ depends
	only on $(\theta\rho)|_{\F_{q^m}}$, which is again in the image. By
	Galois descent, $U$ is spanned by its $\F_q$-rational vectors; being
	nonzero, it contains some $0\neq\vec{b}_0\in\F_q^{\,w}$. We claim
	$\vec{b}_0\in K_0$, which finishes the proof. Indeed,
	$\vec{b}_0\in U\subseteq\oF_q\otimes_{\F_{q^r}}K_0$ and
	$\vec{b}_0\in\F_{q^r}^{\,w}$. On the other hand, $(\oF_q\otimes_{\F_{q^r}}K_0) \cap \F_{q^r}^{\,w} =K_0$. 
\end{proof}

\begin{thm}\label{thm:moorefamily}
Let $\gcd(m,r)=1$, let $\gamma_1,\dots,\gamma_w\in\overline\F_q$ be
linearly independent over $\F_q$, and suppose $\vec{b}\in\F_{q^m}^{\,w}$
satisfies $\sum_{k=1}^wb_k\gamma_k^{[ri]}=0$ for $0\leq i\leq w-1$. Then
$\vec{b}=0$.
\end{thm}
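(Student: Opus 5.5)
The proof combines the two preceding lemmas, arguing by contradiction. Suppose $\vec{b}\neq 0$. Let $K_0\subseteq\F_{q^r}^{\,w}$ be the space of $\F_{q^r}$-rational relations among $\gamma_1,\dots,\gamma_w$, as in Lemma~\ref{lem:kernelrat}. The hypothesis says exactly that $\vec{b}$ lies in the solution space $V$ of the system $\sum_k x_k\gamma_k^{[ri]}=0$, $0\leq i\leq w-1$. By Lemma~\ref{lem:kernelrat},
\[
V=\oF_q\otimes_{\F_{q^r}}K_0 .
\]
Since the entries of $\vec{b}$ lie in $\F_{q^m}$, we get
\[
0\neq\vec{b}\in(\oF_q\otimes_{\F_{q^r}}K_0)\cap\F_{q^m}^{\,w}.
\]

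Next we apply Lemma~\ref{lem:pushdown}, which requires $\gcd(m,r)=1$; this is assumed in the statement. It yields a nonzero vector $\vec{b}_0\in K_0\cap\F_q^{\,w}$. Unwinding the definition of $K_0$, this is a nontrivial relation
\[
\sum_k b_{0,k}\gamma_k=0\quad\text{with } b_{0,k}\in\F_q ,
\]
which contradicts the $\F_q$-linear independence of the $\gamma_k$. Hence $\vec{b}=0$.

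Since both lemmas are already proved, the argument is essentially formal. The only point needing care is that Lemma~\ref{lem:kernelrat} uses exactly $w$ equations ($0\leq i\leq w-1$), which matches the hypothesis here. The genuine content sits in the Galois descent step of Lemma~\ref{lem:pushdown}, where coprimality of $m$ and $r$ lets the $\F_{q^r}$-Galois orbit of $\vec{b}$ cover its full $\Gal(\F_{q^m}/\F_q)$-orbit. That step is taken as given.
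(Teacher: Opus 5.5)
Your proposal is correct and is the paper's proof: Lemma~\ref{lem:kernelrat} places $\vec{b}$ in $\oF_q\otimes_{\F_{q^r}}K_0$, and Lemma~\ref{lem:pushdown}, which needs $\gcd(m,r)=1$, then gives a nonzero $\F_q$-rational relation $\sum_k c_k\gamma_k=0$. That relation contradicts the $\F_q$-independence of the $\gamma_k$.
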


\begin{proof}
By Lemma~\ref{lem:kernelrat},
$\vec{b}\in\overline\F_q\otimes_{\F_{q^r}}K_0$. If $\vec{b}\neq0$,
Lemma~\ref{lem:pushdown} produces $0\neq \vec{c}\in K_0\cap\F_q^{\,w}$,
i.e., an $\F_q$-linear relation $\sum_{k=1}^wc_k\gamma_k=0$, contradicting
the independence of the $\gamma_k$. 
\end{proof}

\begin{rem}\label{rem:moorefalse}
The restriction to $\vec{b}\in\F_{q^m}^{\,w}$ is essential: the square
$q^r$-Moore matrix of $\F_q$-independent elements can be singular.
Indeed, suppose $r\geq2$, $\fp_0-1$ is irreducible, and take $\fp=\fp_0-1$. Then
$M=\F_{q^{nr}}$ (see Remark~\ref{rem1.6}). Elements of
$M$ that are independent over $\F_q$ are far from independent over
$\F_{q^r}$ (indeed, $\dim_{\F_{q^r}}M=n<nr=\dim_{\F_q}M$), and any $\F_{q^r}$-relation among them makes the full
Moore matrix singular by Lemma~\ref{lem:kernelrat}. 
\end{rem}

\subsection{The rank-BCH bound}\label{ss:bch}
	Let $\alpha\in M=\phi[\fp]$ and let $S=\kappa[\pi|_M]$. We say that $\alpha$ is \textit{$S$-normal} if the
	following equivalent conditions hold:
	\begin{enumerate}
		\item $\alpha,\pi(\alpha),\dots,\pi^{n-1}(\alpha)$, i.e., 
		$\alpha,\alpha^{[r]},\dots,\alpha^{[r(n-1)]}$, is a $\kappa$-basis of
		$M$;
		\item $\alpha$ is a cyclic vector for $\pi|_M$ (cf. Proposition~\ref{prop:free});
		\item the evaluation map $\ev_\alpha\colon S\To M$,
		$s\mapsto s(\alpha)$, is bijective, i.e., $M$ is free of rank one
		over $S$ with generator $\alpha$;
		\item no nonzero element of $S$ annihilates $\alpha$.
	\end{enumerate}

\begin{thm}\label{thm:rankbch}
Let $\alpha\in M$ be $S$-normal and let $0\neq f\in R$. If
\[
Z(f)\supseteq\{\alpha^{[ri]}:\ 0\leq i\leq\delta-2\},
\]
then $\wtR(f)\geq\delta$.
\end{thm}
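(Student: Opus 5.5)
The plan is to argue by contradiction. Suppose $w\colonequals\wtR(f)\leq\delta-1$. We will extract from $f$ a nonzero vector in $\F_{q^m}^{\,w}$ satisfying $w$ consecutive $q^r$-Moore equations against $\F_q$-independent elements, which Theorem~\ref{thm:moorefamily} forbids. Since $f\neq0$, we have $w\geq1$.

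First I would set up the expansion. Let $b_1,\dots,b_w$ be an $\F_q$-basis of the support $V(f)\subseteq L$. By the bijectivity of $L\otimes_{\F_q}S\to R$ (Proposition~\ref{prop:dc}(4)), restricted to $V(f)\otimes_{\F_q}S$, we can write $f=\sum_{k=1}^w b_kg_k$ with uniquely determined $g_k\in S$.

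The key step is to show that $g_1,\dots,g_w$ are $\F_q$-linearly independent. Suppose instead, say, $g_w=\sum_{k<w}c_kg_k$ with $c_k\in\F_q$. Then
\[
f=\sum_{k<w}(b_k+c_kb_w)\,g_k\in V'\cdot S
\]
with $\dim_{\F_q}V'<w$, contradicting the minimality characterization \eqref{eq:wtmin}. Now put $\gamma_k\colonequals g_k(\alpha)\in M\subseteq\oF_q$. Since $\alpha$ is $S$-normal, $\ev_\alpha$ is injective on $S$. Hence an $\F_q$-relation $\sum c_k\gamma_k=0$ forces $\sum c_kg_k=0$, and therefore $c=0$. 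So the $\gamma_k$ are $\F_q$-linearly independent.

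Next I would evaluate $f$ on the given roots. Since $S$ is commutative, $g_k\pi^i=\pi^ig_k$. Moreover $\pi^i=\tau^{ri}$ acts on $M$ as $x\mapsto x^{[ri]}$, and each $b_k\in L$ acts by scalar multiplication. Hence for $0\leq i\leq\delta-2$,
\[
0=f\bigl(\alpha^{[ri]}\bigr)=\sum_{k=1}^w b_k\,g_k\bigl(\pi^i\alpha\bigr)=\sum_{k=1}^w b_k\,\gamma_k^{[ri]}.
\]
Since $w-1\leq\delta-2$, these equations hold in particular for $0\leq i\leq w-1$.

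Finally, $m\mid n$ and $\gcd(n,r)=1$ give $\gcd(m,r)=1$. So Theorem~\ref{thm:moorefamily} applies to $\vec b=(b_1,\dots,b_w)\in\F_{q^m}^{\,w}$ and forces $\vec b=0$. This is absurd, since the $b_k$ form a basis. Therefore $\wtR(f)\geq\delta$.

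The main obstacle was the Moore-type nondegeneracy, which is already handled by Theorem~\ref{thm:moorefamily}. What remains delicate is choosing the expansion so that its hypotheses hold. The coefficients must lie in $\F_{q^m}$, and the $\gamma_k$ must be $\F_q$-independent rather than merely distinct. This is exactly why I take $V=V(f)$ of minimal dimension and then use $S$-normality to transfer the independence of the $g_k$ to the $\gamma_k$.
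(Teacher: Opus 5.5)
Your proof is correct and follows the paper's argument. You regroup $f=\sum_k b_k g_k$ over an $\F_q$-basis of $V(f)$, pass $\F_q$-independence from the $g_k$ to $\gamma_k=g_k(\alpha)$ via injectivity of $\ev_\alpha$, and apply Theorem~\ref{thm:moorefamily} to the first $w$ twisted equations. The only difference is cosmetic: you get independence of the $g_k$ from the minimality in \eqref{eq:wtmin}, whereas the paper reads it off from row rank equals column rank of the coefficient matrix $(a_{kj})$.
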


\begin{proof}
Set $w\colonequals\wtR(f)$ and suppose $w\leq\delta-1$. Choose a
basis $b_1,\dots,b_w$ of the support $V(f)\subseteq L$. Expanding
$f=\sum_{j=1}^{nd}c_js_j$ over an $\F_q$-basis $(s_j)$ of $S$ as in
Section~\ref{ss:metric}, each coefficient $c_j\in V(f)$ has a unique
expansion
\[
c_j=\sum_{k=1}^{w}a_{kj}b_k,\qquad a_{kj}\in\F_q ,
\]
and regrouping gives
\[
f=\sum_{k=1}^{w}b_kt_k,\qquad
t_k\colonequals\sum_{j=1}^{nd}a_{kj}s_j\in S .
\]
Let $A=(a_{kj})\in\Mat_{w\times nd}(\F_q)$. The columns of $A$ are
the coordinate vectors of $c_1,\dots,c_{nd}$ in the basis $(b_k)$. 
Since the $c_j$ span the $w$-dimensional space $V(f)$, the column
rank of $A$ is $w$, hence so is the row rank. The $k$-th row of $A$
is the coefficient vector of $t_k$ in the basis $(s_j)$ of $S$, so
$t_1,\dots,t_w$ are $\F_q$-linearly independent in $S$.

Since $S$ is commutative, we have $t_k\pi^i=\pi^it_k$. 
Setting $\gamma_k\colonequals t_k(\alpha)$, for
$0\leq i\leq\delta-2$ the hypothesis gives 
\[
0=f\bigl(\alpha^{[ri]}\bigr)
=\sum_{k=1}^{w}b_k\,t_k\bigl(\pi^i(\alpha)\bigr)
=\sum_{k=1}^{w}b_k\,\gamma_k^{[ri]} ,
\]
a genuine $\overline\F_q$-linear system, because the $b_k\in L$ act
as scalars. By normality, the evaluation $\ev_\alpha$ is injective, so $\gamma_1,\dots,\gamma_w$ are
$\F_q$-linearly independent, and $\vec{b}\in\F_{q^m}^{\,w}$ satisfies the
first $w$ of the displayed equations (available since
$\delta-1\geq w$). Theorem~\ref{thm:moorefamily}, applicable because
$m\mid n$ and $\gcd(n,r)=1$ imply $\gcd(m,r)=1$, forces
$b_1=\dots=b_w=0$, i.e., $f=0$, a contradiction.
\end{proof}

\begin{defn}
	\label{def:rankbchcode} Let $\alpha\in M$. 
	By analogy with \cite{MP}, we call the cyclic submodule $R_0\alpha$ the \textit{cyclotomic
		space} of $\alpha$; this is the 
		smallest root space containing $\alpha$. 
	For $S$-normal  
	$\alpha$ and $1\leq\delta\leq m$, the
	\emph{rank-BCH code of designed distance $\delta$} is
	$C=\Ann(W_\delta)$, where
	\[
	W_\delta=\bigoplus_{i=0}^{\delta-2}\pi^iR_0\,\alpha
	=\bigoplus_{i=0}^{\delta-2}R_0\,\alpha^{[ri]}
	\]
	is the sum of the cyclotomic spaces of the first $\delta-1$ points
	of the $\pi$-orbit of $\alpha$. The space $W_\delta$ is the smallest root space containing
	$\{\alpha^{[ri]}:0\leq i\leq\delta-2\}$, so that $C$ is the largest
	code vanishing on that orbit segment. The sum is direct inside
	$M=\bigoplus_{i<m}\pi^iR_0\,\alpha$, so 
	\begin{equation}\label{eq:dimWdelta}
		\dim_\kappa W_\delta=s(\delta-1).\end{equation}
\end{defn}

\begin{prop}[Singleton bound]\label{prop:singleton}
Let $C\neq0$ be a code and $k\colonequals\dim_LC$ $($every left ideal
is an $L$-subspace$)$. Let, as before, $d=\deg\fp$. Then
\[
\dR(C)\ \leq\ \Bigl\lfloor\frac{m\,(nd-k)}{nd}\Bigr\rfloor+1 .
\]
\end{prop}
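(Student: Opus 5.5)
Via the splitting of Proposition~\ref{prop:dc}(4), I would identify $R$ with the space of $m\times nd$ matrices over $\F_q$ and then run the classical puncturing argument for the Singleton bound.

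Fix an $\F_q$-basis $b_1,\dots,b_m$ of $L$ and an $\F_q$-basis $(s_j)_{1\le j\le nd}$ of $S$. Every $f\in R$ is uniquely $f=\sum_j c_js_j$ with $c_j\in L$, and writing $c_j=\sum_i a_{ij}b_i$ gives a matrix $A_f=(a_{ij})\in\Mat_{m\times nd}(\F_q)$. The map $f\mapsto A_f$ is an $\F_q$-linear bijection $R\to\Mat_{m\times nd}(\F_q)$. Moreover $\wtR(f)=\dim_{\F_q}V(f)$ equals the column rank of $A_f$, since the columns of $A_f$ are the coordinate vectors of the $c_j$. So $\dR(C)$ is the minimum rank of a nonzero matrix in the image of $C$. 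Since $C$ is an $L$-subspace (left multiplication by $L\subseteq R$) and $\dim_{\F_q}L=m$, we have $\dim_{\F_q}C=mk$.

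Now apply the standard deletion argument. Put $\delta=\dR(C)$ and delete the last $\delta-1$ rows, i.e.\ consider the $\F_q$-linear map
\[
\rho\colon C\To\Mat_{(m-\delta+1)\times nd}(\F_q),
\]
which keeps only the first $m-\delta+1$ rows of $A_f$. If $\rho(f)=0$, then $A_f$ has at most $\delta-1$ nonzero rows, so its rank is at most $\delta-1<\delta$, which forces $f=0$. Hence $\rho$ is injective and
\[
mk\le(m-\delta+1)\,nd .
\]
Rearranging gives $\delta-1\le m-mk/(nd)=m(nd-k)/(nd)$. Since $\delta-1$ is an integer, taking floors yields the stated bound.

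The only point needing care is whether $k\le nd$ and the dimension count $\dim_{\F_q}C=mk$. The first holds because $\dim_{\F_q}R=mnd$ gives $\dim_L R=nd$. The second is just $L$ being a field of $\F_q$-dimension $m$ acting on $C$. I would also remark that if one prefers transposes (deleting columns), the bound $nd\cdot k\,m\le m\cdot nd\cdot(\dots)$ would give the weaker inequality. So deleting rows, i.e.\ working along the $L$-side of dimension $m$, is the intended choice, and the expected obstacle is simply choosing the correct side.
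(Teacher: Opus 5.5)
Your argument is correct and is essentially the paper's. The kernel of your row-deletion map $\rho$ is exactly $V\cdot S$ for $V=\langle b_{m-\delta+2},\dots,b_m\rangle_{\F_q}$, so injectivity of $\rho$ says $C\cap(V\cdot S)=0$ with $\dim V=\delta-1$, and your count $mk\le(m-\delta+1)nd$ is the contrapositive of the paper's observation that $C\cap(V\cdot S)\neq0$ whenever $mk+\dim V\cdot nd>mnd$; the only difference is that you pass through the matrix picture (set up in the paper only later, in Lemma~\ref{lem:isometry}), whereas the paper works with $V\cdot S\subseteq R$ directly.
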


\begin{proof}
For an $\F_q$-subspace $V\subseteq L$ of dimension $v$, bijectivity
of the splitting $L\otimes_{\F_q} S \to R$
gives \[\dim_{\F_q}(V\cdot S)=v\cdot \dim_{\F_q}S = v\cdot nd\] exactly. If
$mk+v\cdot nd>\dim_{\F_q}R=mnd$, i.e.,  $v>m(nd-k)/nd$, then
$C\cap (V\cdot S)\neq0$, producing a nonzero element of weight
$\leq v$. Take $v=\lfloor m(nd-k)/nd\rfloor+1$; note $v\leq m$, because $k\geq1$.
\end{proof}

\begin{thm}\label{thm:exact}
Let $\alpha\in M$ be $S$-normal, $1\leq\delta\leq m$, and let $C=\Ann(W_\delta)$ be the
rank-BCH code of designed distance $\delta$. Then
\begin{align*}
\dim_{\F_q}C &=nd\,(m-\delta+1),\\ \dR(C) &=\delta. 
\end{align*}
Thus, the designed distance is the true distance.
\end{thm}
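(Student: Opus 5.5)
The proof has three parts: compute the dimension, get the lower bound from the rank-BCH bound, and get the matching upper bound from the Singleton bound.

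\emph{Dimension.} By Corollary~\ref{cor:lattice}(2) and \eqref{eq:dimWdelta},
\[
\dim_\kappa C=m\bigl(n-s(\delta-1)\bigr)=ms\bigl(m-\delta+1\bigr)=n(m-\delta+1).
\]
Multiplying by $d=[\kappa:\F_q]$ gives $\dim_{\F_q}C=nd(m-\delta+1)$. This is nonzero because $\delta\le m$.

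\emph{Lower bound.} Every $f\in C=\Ann(W_\delta)$ vanishes on $W_\delta$. In particular $f$ kills $\alpha^{[ri]}$ for $0\le i\le\delta-2$. If $f\neq0$, Theorem~\ref{thm:rankbch} then gives $\wtR(f)\ge\delta$, so $\dR(C)\ge\delta$. For $\delta=1$ this is the trivial bound $\wtR(f)\ge1$ for $f\ne0$, so no separate case is needed.

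\emph{Upper bound.} Apply Proposition~\ref{prop:singleton} with
\[
k=\dim_LC=\frac{nd(m-\delta+1)}{m}.
\]
This is an integer, since $C$ is an $L$-subspace and therefore $m$ divides its $\F_q$-dimension. Then
\[
\frac{m(nd-k)}{nd}=m-(m-\delta+1)=\delta-1
\]
exactly, and the floor gives $\dR(C)\le\delta-1+1=\delta$. Combining the two bounds yields $\dR(C)=\delta$.

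I expect no serious obstacle; the work was already done in Theorem~\ref{thm:rankbch} and Corollary~\ref{cor:lattice}. The one point to check is that $\Ann(W_\delta)$ is the relevant code for the rank-BCH bound. This holds because vanishing on $W_\delta$ implies vanishing on the orbit segment, so we only need $Z(f)\supseteq W_\delta$ and not equality.
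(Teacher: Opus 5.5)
Your proposal is correct and follows the paper's proof essentially step for step. You get the dimension from Corollary~\ref{cor:lattice}(2) and \eqref{eq:dimWdelta}, the lower bound $\dR(C)\geq\delta$ from Theorem~\ref{thm:rankbch}, and the matching upper bound from Proposition~\ref{prop:singleton} with $k=\dim_LC=sd(m-\delta+1)$; your explicit checks that $C\neq0$ (needed for the Singleton bound) and that $\delta=1$ needs no separate case are small but welcome additions.
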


\begin{proof} By definition, $\dim_\kappa W_\delta=s(\delta-1)$; cf. \eqref{eq:dimWdelta}. 
Plugging this into Corollary~\ref{cor:lattice}(2) gives 
\[
\dim_\kappa C=m\bigl(n-s(\delta-1)\bigr)=ms\,(m-\delta+1)
=n\,(m-\delta+1),
\]
i.e., $\dim_{\F_q}C=nd(m-\delta+1)$. 

Every nonzero $f\in C$ vanishes on
\[W_\delta\supseteq\{\alpha^{[ri]}:i\leq\delta-2\},\] so
$\dR(C)\geq\delta$ by Theorem~\ref{thm:rankbch}. 
For the reverse
inequality, note that \[\dim_LC=\frac{\dim_{\F_q}C}{m}=sd\,(m-\delta+1)\] and 
\[
\Bigl\lfloor\frac{m\,(nd-sd(m-\delta+1))}{nd}\Bigr\rfloor+1
=\Bigl\lfloor\frac{m\,sd(\delta-1)}{msd}\Bigr\rfloor+1=\delta, 
\]
so $\dR(C)\leq\delta$ by Proposition~\ref{prop:singleton}. 
\end{proof}

\subsection{An axiomatic formulation}\label{ss:axioms}

The proofs of Sections~\ref{s:roots}--\ref{s:bounds} are based on a short
list of properties of the pair $(R,R_0)$. We record the list, for a supersingular $\phi$ of
rank $n$ over a finite $A$-field (all endomorphisms defined over the
base), a prime $\fp$ away from the characteristic,
$\kappa=A/\fp$, $M=\phi[\fp]$, and
$E=\End_\kappa(M)\cong\Mat_n(\kappa)$:

\begin{enumerate}
	\item[(A1)] (\textbf{Structure}) A commutative subring
	$\kappa\subseteq R_0\subseteq E$, Gorenstein as a $\kappa$-algebra,
	such that $M$ is free over $R_0$. Put
	\[R\colonequals\Cent_E(R_0)=\End_{R_0}(M).\] \emph{Codes} are left
	ideals of $R$, \emph{root spaces} are $R_0$-submodules of $M$.
	
	\item[(A2)] (\textbf{Metric}) A subfield $\F_q\subseteq L\subseteq
	R$ and an element $\sigma\in R$ such that, with
	$S\colonequals\kappa[\sigma]\supseteq R_0$, multiplication
	$L\otimes_{\F_q}S\To R$ is bijective; the weight of $f\in R$ is the
	minimal $\F_q$-dimension of a $V\subseteq L$ with $f\in V\cdot S$.
	
	\item[(A3)] (\textbf{Nondegeneracy}) The element $\sigma$ is non-derogatory on $M$
	over $\kappa$, and for all $\F_q$-independent
	$\gamma_1,\dots,\gamma_w\in M$ the map
	\begin{align*} L^w &\To M^w,\\ 
		(b_k) &\longmapsto\bigl(\sum_{k=1}^w b_k\sigma^i(\gamma_k)\bigr)_{0\leq i\leq w-1},
	\end{align*}
	is injective.
\end{enumerate}

Note that (A3) is an assertion about the action of $L$ through $E$;
no assumption is made that $L$ consists of scalar multiplications.
In the family of this paper it does, and that is what converts the
map in (A3) into a Moore-type system. 

The lattice anti-isomorphism (Theorem~\ref{thm:lattice}) holds under (A1) alone. 
Given (A1)--(A3), the proofs of Theorem~\ref{thm:rankbch} and
Proposition~\ref{prop:singleton} go through verbatim: nothing in them
uses more than the axioms. (The exact-distance theorem,
Theorem~\ref{thm:exact}, uses in addition the grading
$S=\bigoplus_{i<m}\sigma^iR_0$, which in this paper comes from
$R_0=\kappa[\sigma^m]$ and is not part of the axioms.)
On the other hand, principality of the code
ideals is \emph{not} axiomatic: it came from $R_0$ being a quotient
of a principal ideal domain (Proposition~\ref{prop:principal}). 

This paper realizes the axioms on the family
$\phi_T=t+\tau^n$, for every $m\mid n$ and every prime
$\fp\neq\fp_0$. Whether the axioms admit instances beyond this specific family 
will be the subject of a future investigation.


\section{Matrix realization, MRD codes, and prescribed parameters}\label{s:matrix}

Fix an $\F_q$-basis $\lambda_0,\dots,\lambda_{m-1}$ of $L=\F_{q^m}$ and an
$\F_q$-basis $s_1,\dots,s_{nd}$ of $S$. By Proposition~\ref{prop:dc}(4),
every $f\in R$ has a unique expansion
\[
f=\sum_{j=1}^{nd}b_js_j,\qquad b_j\in L. 
\]
Writing $b_j=\sum_{i=0}^{m-1}a_{ij}\lambda_i$ with $a_{ij}\in\F_q$,  we
put
\[
A_f\colonequals(a_{ij})\in\Mat_{m\times nd}(\F_q),
\]
so that the $j$-th column of $A_f$ is the coordinate vector of $b_j$. 

For a
code $C$,  put $A(C)=\{A_f: f\in C\}$. For $\zeta\in L$, let
$M_\zeta\in\Mat_m(\F_q)$ be the matrix of multiplication by $\zeta$ on $L$
in the basis $(\lambda_i)$; the $M_\zeta$ form a subfield
$M_L\cong\F_{q^m}$ of $\Mat_m(\F_q)$.

\begin{lem}\label{lem:isometry} We have:
\begin{enumerate}
\item The map $f\mapsto A_f$ is $\F_q$-linear and injective, and
$\operatorname{rank}(A_f)=\wtR(f)$ for every $f\in R$.
\item $A_{\zeta f}=M_\zeta A_f$ for $\zeta\in L$ and $f\in R$.
\end{enumerate}
Consequently, for every code $C$, the set $A(C)$ is an $\F_q$-linear
rank-metric code in $\Mat_{m\times nd}(\F_q)$ with
$\dim_{\F_q}A(C)=\dim_{\F_q}C$ and $\dR(A(C))=\dR(C)$, closed under left
multiplication by the field $M_L$.
\end{lem}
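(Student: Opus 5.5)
The argument is a direct unwinding of definitions, built on the bijectivity of the splitting $L\otimes_{\F_q}S\to R$ in Proposition~\ref{prop:dc}(4).

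For (1), I will first prove linearity and injectivity. The expansion $f=\sum_j b_js_j$ with $b_j\in L$ is unique, so $f\mapsto(b_1,\dots,b_{nd})\in L^{nd}$ is a well-defined $\F_q$-linear bijection $R\to L^{nd}$. Composing with the coordinate isomorphism $L\cong\F_q^m$ given by $(\lambda_i)$, applied columnwise, gives the $\F_q$-linear bijection $R\to\Mat_{m\times nd}(\F_q)$, $f\mapsto A_f$. In particular this map is injective. Both sides have $\F_q$-dimension $mnd$, which is consistent with $\dim_{\F_q}R=mnd$.

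For the rank identity, I will note that the column space of $A_f$ is the image, under the coordinate isomorphism $L\to\F_q^m$, of $\langle b_1,\dots,b_{nd}\rangle_{\F_q}=V(f)$. Therefore $\operatorname{rank}A_f=\dim_{\F_q}V(f)=\wtR(f)$, by the definition \eqref{eq:wtmin}.

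For (2), take $\zeta\in L$. Since $L$ is a subring of $R$ (commutative, $\zeta b_j\in L$), we have $\zeta f=\sum_j(\zeta b_j)s_j$. This is again an expansion of the required shape, so by uniqueness the coefficients of $\zeta f$ are $\zeta b_j$. The coordinate vector of $\zeta b_j$ is $M_\zeta$ times the coordinate vector of $b_j$, by the definition of $M_\zeta$. Applying this column by column gives $A_{\zeta f}=M_\zeta A_f$.

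For the consequences, let $C$ be a left ideal. It is closed under addition and under left multiplication by $\F_q\subseteq L\subseteq R$, so $A(C)$ is an $\F_q$-subspace. Injectivity gives $\dim_{\F_q}A(C)=\dim_{\F_q}C$, and since $f\mapsto A_f$ is a bijection $C\to A(C)$ sending $0$ to $0$, the rank identity in (1) gives $\dR(A(C))=\dR(C)$. Finally, $C$ is closed under left multiplication by $L\subseteq R$, so by (2) $A(C)$ is closed under left multiplication by $M_L$.

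I expect no real obstacle. The only point needing care is (2): the expansion relies on $\zeta b_j\in L$ and on $\zeta$ multiplying on the left, so that the $s_j$ stay on the right. No commutation of $\zeta$ past elements of $S$ is ever needed. This is fortunate, since $\pi$ does not commute with $L$.
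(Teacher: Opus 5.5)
Your proposal is correct and follows essentially the same route as the paper. Uniqueness of the expansion coming from the bijective splitting $L\otimes_{\F_q}S\to R$ gives linearity and injectivity, the column space of $A_f$ is the coordinate image of $V(f)$ so that $\operatorname{rank}(A_f)=\wtR(f)$, and left multiplication by $\zeta\in L$ acts on each coefficient column via $M_\zeta$, with the consequences for $A(C)$ following directly.
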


\begin{proof}
(1) Linearity and injectivity follow from the uniqueness of the expansion.
The column space of $A_f$ is the space of coordinate vectors of the
$\F_q$-span of $b_1,\dots,b_{nd}$, which is the support $V(f)$ of
Section~\ref{ss:metric}. Hence $\rank(A_f)=\dim_{\F_q}V(f)=\wtR(f)$
by \eqref{eq:wtmin}. 

(2) Since $\zeta f=\sum_j(\zeta b_j)s_j$, the $j$-th
column of $A_{\zeta f}$ is the coordinate vector of $\zeta b_j$, which is
$M_\zeta$ applied to the coordinate vector of $b_j$.
\end{proof}

Recall the rank-metric \textit{Singleton bound} \cite{Delsarte,Gabidulin}: if
$\mathcal C\subseteq\Mat_{a\times b}(\F_q)$ with $a\leq b$ is an
$\F_q$-linear code of minimum rank distance $\delta$, then
\[
\dim_{\F_q}\mathcal C\ \leq\ b\,(a-\delta+1),
\]
and $\mathcal C$ is called \emph{MRD} if equality holds.

\begin{thm}\label{thm:mrd}
Let $m\mid n$ and let $\fp\neq\fp_0$ be arbitrary $($no coprimality between
$m$ and $d$ is assumed$)$. Let $\alpha\in M$ be $S$-normal, $1\leq\delta\leq m$,
and let $C=\Ann(W_\delta)$ be the rank-BCH code of designed distance
$\delta$. Then $A(C)\subseteq\Mat_{m\times nd}(\F_q)$ is an
$\F_{q^m}$-linear MRD code:
\[
\dim_{\F_q}A(C)=nd\,(m-\delta+1),\qquad \dR(A(C))=\delta .
\]
\end{thm}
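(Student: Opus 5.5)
The plan is to transport Theorem~\ref{thm:exact} through the coefficient map of Lemma~\ref{lem:isometry}; no new arithmetic input should be needed.

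First, by Lemma~\ref{lem:isometry}(1), the map $f\mapsto A_f$ is $\F_q$-linear and injective, so $A(C)$ is an $\F_q$-subspace of $\Mat_{m\times nd}(\F_q)$ with $\dim_{\F_q}A(C)=\dim_{\F_q}C$. Since $\rank(A_f)=\wtR(f)$ and the map is linear, the minimum rank distance satisfies $\dR(A(C))=\min_{0\neq f\in C}\wtR(f)=\dR(C)$. Theorem~\ref{thm:exact} then gives
\[
\dim_{\F_q}A(C)=nd\,(m-\delta+1),\qquad \dR(A(C))=\delta .
\]
Theorem~\ref{thm:exact} holds with no coprimality between $m$ and $d$, since it rests only on Corollary~\ref{cor:lattice}(2), Theorem~\ref{thm:rankbch} and Proposition~\ref{prop:singleton}.

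Second, for the MRD property I will compare with the Delsarte--Gabidulin Singleton bound with $a=m$ and $b=nd$. This needs $m\leq nd$, which holds because $m\mid n$ and $d\geq1$. The bound reads $\dim_{\F_q}\mathcal C\leq nd(m-\delta+1)$, and equality is exactly what we obtained, so $A(C)$ is MRD. (One could also note that Proposition~\ref{prop:singleton} was already attained in Theorem~\ref{thm:exact}, but the matrix statement needs the standard bound.)

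Third, for $\F_{q^m}$-linearity: $C$ is a left ideal of $R$ and $L\subseteq R$, so $\zeta f\in C$ for every $\zeta\in L$ and $f\in C$. By Lemma~\ref{lem:isometry}(2), $M_\zeta A_f=A_{\zeta f}\in A(C)$, so $A(C)$ is a left module over the field $M_L\cong\F_{q^m}$. There is no real obstacle here. The only points to check are the dimension condition $m\leq nd$, which lets the standard Singleton bound apply in the orientation used, and that the range $1\leq\delta\leq m$ matches the one in Theorem~\ref{thm:exact}.
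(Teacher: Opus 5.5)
Your proposal is correct and follows the paper's proof essentially step for step. You transport Theorem~\ref{thm:exact} through the rank isometry of Lemma~\ref{lem:isometry}(1), check $m\leq nd$ so that the Delsarte--Gabidulin Singleton bound in $\Mat_{m\times nd}(\F_q)$ applies and is attained, and read off $\F_{q^m}$-linearity from Lemma~\ref{lem:isometry}(2).
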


\begin{proof}
By Theorem~\ref{thm:exact} and Lemma~\ref{lem:isometry},
$\dim_{\F_q}A(C)=nd(m-\delta+1)$ and $\dR(A(C))=\delta$. Since
$m\leq n\leq nd$, the Singleton bound for $\Mat_{m\times nd}(\F_q)$ reads
\[\dim_{\F_q}A(C)\leq nd(m-\delta+1),\] and it is attained. The
$\F_{q^m}$-linearity is Lemma~\ref{lem:isometry}(2).
\end{proof}

\begin{rem}\label{rem:mrd}
 Take $d=1$ and $m=n$. Then $\kappa=\F_q$, $s=1$, $R_0=\F_q$,
$R=E\cong\Mat_n(\F_q)$, $S=\F_q[\pi|_M]$ with basis $1,\pi,\dots,\pi^{n-1}$,
and $K=L=\F_{q^n}$. An element $f=\sum_{i<n}f_i\pi^i$, $f_i\in L$, acts on
$M$ by $v\mapsto\sum_if_iv^{[ri]}$, and $A_f$ is the coordinate matrix of
the vector $(f_0,\dots,f_{n-1})\in L^n$, whose rank weight is the usual
one. Root spaces are $\F_q$-subspaces of $M$, and for $S$-normal $\alpha$
the code $C=\Ann(W_\delta)$ consists of the $f$ with
\[
\sum_{j=0}^{n-1}f_j\,\alpha^{[r(i+j)]}=0,\qquad 0\leq i\leq\delta-2 .
\]
Since $M=L\alpha$, set
\[
\beta_j=\alpha^{[rj]}/\alpha\in L,\qquad 0\leq j<n .
\]
The elements $\beta_0,\dots,\beta_{n-1}$ are $\F_q$-linearly
independent, and $\alpha^{[r(i+j)]}=\beta_j^{[ri]}\alpha^{[ri]}$, so
dividing the $i$-th equation above by $\alpha^{[ri]}$ shows that, as a
code in $L^n$, $C$ has the $q^r$-Moore matrix
$\bigl(\beta_j^{[ri]}\bigr)_{0\leq i\leq\delta-2,\,0\leq j\leq n-1}$
as parity-check matrix. Hence $C$ is the generalized Gabidulin code of
\cite{KG} of dimension $n-\delta+1$ with Frobenius parameter $r$, in the
notation of \cite[\S2.2]{MP} the code
$\mathrm{Gab}_{n-\delta+1,r}(\beta_0,\dots,\beta_{n-1})$, and at $r=1$
a classical Gabidulin code \cite{Gabidulin}. (When $c=1$ one has
$M=L$, and one may use $\alpha^{[rj]}$ in place of $\beta_j$, which is
the normal-basis description of \cite[Thm.~6]{MP}.)
\end{rem}

We now choose the bases so that multiplication by $\pi$ becomes visible on
the matrices. Since $\gcd(r,m)=1$, an element $\theta\in L$ is normal over
$\F_q$ if and only if $\lambda_i\colonequals\theta^{[ri]}$, $0\leq i<m$, is
a basis of $L$; fix such a $\theta$. Fix $\varepsilon\in\kappa$ normal over
$\F_q$ and put $\varepsilon_\ell=\varepsilon^{[\ell]}$, $0\leq\ell<d$. By
Proposition~\ref{prop:free}(1), the elements
\[
s_{k,\ell}\colonequals\varepsilon_\ell\,\pi^k,\qquad 0\leq k<n,\ 0\leq\ell<d,
\]
form an $\F_q$-basis of $S$, and we index the columns of $A_f$ by the pairs
$(k,\ell)$. Let $Q\in\Mat_m(\F_q)$ be the permutation matrix of the cyclic
shift $i\mapsto i+1$ of $\{0,\dots,m-1\}$, and let $T\in\Mat_{nd}(\F_q)$ be
the matrix with
\[
T_{(k,\ell),(k+1,\ell)}=1\quad(0\leq k\leq n-2),\qquad
T_{(n-1,\ell),(0,\ell')}=c_{\ell\ell'},
\]
where $c\,\varepsilon_\ell=\sum_{\ell'}c_{\ell\ell'}\varepsilon_{\ell'}$ in
$\kappa$, and all other entries zero. Thus right multiplication by $T$
shifts the $n$ blocks of $d$ columns by one, and feeds the last block back
into the first through the matrix of multiplication by $c$ on $\kappa$.

\begin{prop}\label{prop:shift}
For every $f\in R$,
\[
A_{\pi f}=Q\,A_f\,T .
\]
Both $Q$ and $T$ are invertible. Consequently, for every code $C$, the
matrix code $A(C)$ is invariant under $A\mapsto QAT$ and under
$A\mapsto M_\zeta A$, $\zeta\in L$.
\end{prop}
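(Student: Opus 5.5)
The plan is a direct coefficient computation: expand $f$ in the basis $s_{k,\ell}=\varepsilon_\ell\pi^k$, commute $\pi$ past the $L$-coefficients, and read off how rows and columns of $A_f$ move. Write $f=\sum_{k<n,\,\ell<d} b_{k,\ell}\,\varepsilon_\ell\pi^k$ with $b_{k,\ell}\in L$, the unique expansion from Proposition~\ref{prop:dc}(4). Two commutation rules are available from Theorem~\ref{thm:presentation}(2) and its proof, which use only $L\subseteq\F_{q^n}$ and $\kappa=Z(E)$ and not $\gcd(m,d)=1$:
\begin{itemize}
\item $\pi b=b^{[r]}\pi$ for $b\in L$;
\item $\pi\varepsilon_\ell=\varepsilon_\ell\pi$, since $\kappa$ is central in $E$.
\end{itemize}
Hence
\[
\pi f=\sum_{k,\ell} b_{k,\ell}^{[r]}\,\varepsilon_\ell\pi^{k+1}.
\]
For $k\le n-2$ the term is already in basis form, with coefficient $b_{k,\ell}^{[r]}$ at position $(k+1,\ell)$. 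For $k=n-1$, I use $\pi^n|_M=c$ (Proposition~\ref{prop:nondero}(1)) and $c\varepsilon_\ell=\sum_{\ell'}c_{\ell\ell'}\varepsilon_{\ell'}$. This gives the coefficient $\sum_\ell c_{\ell\ell'}\,b_{n-1,\ell}^{[r]}$ at position $(0,\ell')$. By uniqueness of the expansion, these are exactly the coefficients of $\pi f$.

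Next I translate this into columns of $A_f$. The basis of $L$ is $\lambda_i=\theta^{[ri]}$, and $\theta^{[rm]}=\theta$. So if $b=\sum_i a_i\lambda_i$ with $a_i\in\F_q$, then $b^{[r]}=\sum_i a_i\lambda_{i+1}$, indices mod $m$. Thus the coordinate vector of $b^{[r]}$ is $Q$ times that of $b$. Since $c_{\ell\ell'}\in\F_q$, the coordinate vector of $\sum_\ell c_{\ell\ell'}b^{[r]}_{n-1,\ell}$ is $\sum_\ell c_{\ell\ell'}\,Q\,\mathrm{col}_{(n-1,\ell)}(A_f)$.

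On the other side, column $(k',\ell')$ of $A_fT$ is $\sum_{(k,\ell)}T_{(k,\ell),(k',\ell')}\mathrm{col}_{(k,\ell)}(A_f)$. By the definition of $T$, this equals $\mathrm{col}_{(k'-1,\ell')}$ when $k'\ge1$, and $\sum_\ell c_{\ell\ell'}\mathrm{col}_{(n-1,\ell)}$ when $k'=0$. Left-multiplying by $Q$ reproduces the columns computed above, so $A_{\pi f}=QA_fT$.

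For invertibility, $Q$ is a permutation matrix. $T$ is a block companion-type matrix: a cyclic block shift with one block equal to the matrix of multiplication by $c$ on $\kappa$. Its determinant is $\pm$ that block's determinant, which is nonzero because $c\in\kappa^\times$. Equivalently, $T$ is invertible because $\pi|_M$ is invertible.

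Invariance then follows immediately. Since $\pi\in R$ (Proposition~\ref{prop:dc}(3)) and $L\subseteq R$, any left ideal $C$ is closed under $f\mapsto\pi f$ and $f\mapsto\zeta f$. Applying the identity just proved and Lemma~\ref{lem:isometry}(2) shows that $A(C)$ is invariant under $A\mapsto QAT$ and $A\mapsto M_\zeta A$.

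The main obstacle is the index bookkeeping in the wrap-around block. One has to check that the convention $T_{(n-1,\ell),(0,\ell')}=c_{\ell\ell'}$, rather than its transpose, matches the right action on columns, and that the Frobenius commutes with the $\F_q$-coefficients $c_{\ell\ell'}$. The latter is what lets $Q$ factor out on the left.
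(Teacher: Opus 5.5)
Your proposal is correct and follows the paper's proof essentially step for step. You expand $f$ in the basis $s_{k,\ell}$, move $\pi$ past the $L$-coefficients using $\pi b=b^{[r]}\pi$, handle the wrap-around block with $\varepsilon_\ell\pi^n=c\,\varepsilon_\ell$, note that $b\mapsto b^{[r]}$ acts on coordinates by $Q$, and read off the columns; your determinant argument for $T$ (a block permutation matrix with one block the matrix of multiplication by $c$) simply spells out the paper's remark that $T$ is invertible because $c\neq0$.
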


\begin{proof}
Write $f=\sum_{k,\ell}b_{k,\ell}s_{k,\ell}$. Since $\pi\zeta=\zeta^{[r]}\pi$
for $\zeta\in L$ and $\pi$ commutes with $\kappa$ ($S$ is commutative),
\[
\pi f=\sum_{k,\ell}b_{k,\ell}^{[r]}\,\varepsilon_\ell\,\pi^{k+1},
\]
where $\varepsilon_\ell\pi^{k+1}=s_{k+1,\ell}$ for $k\leq n-2$, while for
$k=n-1$ we have $\varepsilon_\ell\pi^n=c\,\varepsilon_\ell=\sum_{\ell'}c_{\ell\ell'}s_{0,\ell'}$.
If $b=\sum_ia_i\lambda_i$, then
$b^{[r]}=\sum_ia_i\theta^{[r(i+1)]}=\sum_ia_i\lambda_{i+1}$ (indices modulo
$m$, as $\theta^{[rm]}=\theta$), so the coordinate vector of $b^{[r]}$ is
$Q$ applied to that of $b$. Reading off columns: column $(k+1,\ell)$ of
$A_{\pi f}$ is $Q$ times column $(k,\ell)$ of $A_f$ for $k\leq n-2$, and
column $(0,\ell')$ of $A_{\pi f}$ is $\sum_\ell c_{\ell\ell'}\,Q$ times column
$(n-1,\ell)$ of $A_f$; this is $A_{\pi f}=QA_fT$. The matrix $Q$ is a
permutation matrix, and $T$ is invertible because $c\neq0$. The invariance
statements follow, as $C$ is a left ideal.
\end{proof}

\begin{rem}
	MRD codes exist in $\Mat_{a\times b}(\F_q)$ for all $a\leq b$ and all
	$\delta$ (Delsarte's codes), so
	for $s>1$ or $d>1$ the content of Theorem~\ref{thm:mrd} lies in the
	structure of the codes rather than in their parameters: they are
	$\F_{q^m}$-linear, they are left ideals of $\Mat_m(R_0)$, and they carry
	the symmetry of Proposition~\ref{prop:shift}. Whether the codes
	$A(C)$ with $s>1$ or $d>1$ are equivalent to previously known MRD codes is
	a question we leave open.
\end{rem}

The data entering Theorems \ref{thm:exact} and \ref{thm:mrd} are $q$, the
integers $m$ and $s$ (so $n=ms$), the prime $\fp$ of degree $d$, and the
constant $c\in\kappa^\times$. The Drinfeld module itself enters only through
$c=\fp_0\bmod\fp$ and, when $\gcd(m,d)=1$, through the class of $r=\deg\fp_0$
modulo $m$, which determines the automorphism $\sigma$ of
Theorem~\ref{thm:presentation}. The construction can therefore be run
backwards from prescribed data, by Dirichlet's theorem for $A$; the same
tool was used in \cite{BDM24} to construct rank-metric codes with
rank-locality from Drinfeld modules.

\begin{prop}\label{prop:dirichlet}
Let $m,s\geq1$, $n=ms$, let $\fp\in A$ be monic irreducible of degree $d$,
let $c\in\kappa^\times$, and let $e_0$ be an integer with $\gcd(e_0,m)=1$.
There exist infinitely many monic irreducible $\fp_0\in A$ with
\[
\fp_0\equiv c\pmod{\fp},\qquad \gcd(\deg\fp_0,\,n)=1,\qquad
\deg\fp_0\equiv e_0\pmod{m}.
\]
For each such $\fp_0$ and each root $t$ of $\fp_0$, the module
$\phi_T=t+\tau^n$ is supersingular, $\fp\neq\fp_0$, and its $\fp$-torsion
realizes $R_0\cong\kappa[u]/(u^s-c)$, $R\cong\Mat_m(R_0)$, and the codes of
Sections~\ref{s:codes}--\ref{s:matrix} with these parameters. If
$\gcd(m,d)=1$, the automorphism $\sigma$ of Theorem~\textup{\ref{thm:presentation}}
is $\Frob^e|_K$ with $e\equiv e_0\pmod m$, $e\equiv0\pmod d$.
\end{prop}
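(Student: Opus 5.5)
The proof has two parts: produce the primes $\fp_0$ from Dirichlet's theorem for $A$, then check that the earlier results apply to each such $\fp_0$.

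\textbf{Existence of $\fp_0$.} We use Dirichlet's theorem for $\F_q[T]$ in its degree-refined form: for a monic modulus $\fp$ and $c$ prime to $\fp$, the number of monic irreducibles of degree $N$ congruent to $c$ modulo $\fp$ is
\[
\frac{q^N}{N\,\Phi(\fp)}+O\bigl(q^{N/2}\bigr),
\]
where $\Phi(\fp)=q^d-1$. This count is positive for all sufficiently large $N$. So it suffices to show that infinitely many integers $N$ satisfy both
\[
\gcd(N,n)=1 \qquad\text{and}\qquad N\equiv e_0 \pmod m .
\]
Consider the residue classes modulo $n$ that reduce to $e_0$ modulo $m$. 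We need one of them to be a unit modulo $n$. This is the standard surjectivity of $(\mathbb Z/n)^\times\to(\mathbb Z/m)^\times$, which applies because $\gcd(e_0,m)=1$. Concretely, by CRT choose $N\equiv e_0 \pmod{m}$ and $N\equiv 1$ modulo each prime dividing $n$ but not $m$.

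Every $N$ in the resulting arithmetic progression that is large enough yields such $\fp_0$. For $N>d$ we also get $\fp_0\neq\fp$ automatically. This gives infinitely many $\fp_0$.

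\textbf{Applying the earlier results.} Set $r=\deg\fp_0$. Then $\gcd(r,n)=1$, so $\phi$ is supersingular by Lemma~\ref{lem:height}. Moreover $c=\fp_0\bmod\fp$ by construction, so Proposition~\ref{prop:free} gives
\[
R_0\cong\kappa[u]/(u^s-c),
\]
and Proposition~\ref{prop:dc} gives $R\cong\Mat_m(R_0)$. All subsequent sections were proved for arbitrary $\fp\neq\fp_0$ and $m\mid n$, so they apply to these parameters. Finally, when $\gcd(m,d)=1$, Theorem~\ref{thm:presentation}(2) gives $\sigma=\Frob^e|_K$ with $e\equiv r \pmod m$ and $e\equiv0 \pmod d$. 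Since $r\equiv e_0\pmod m$, this is the stated $e$.

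\textbf{Main obstacle.} The only delicate step is citing Dirichlet's theorem with degree control. The plain statement "infinitely many primes in a class" does not suffice; we need either the prime polynomial theorem in arithmetic progressions with an explicit error term (for instance via the Riemann hypothesis for $L$-functions of Dirichlet characters modulo $\fp$), or Kornblum's theorem in degree-counting form. I would cite Rosen, \emph{Number Theory in Function Fields}, Thm.~4.8. Its asymptotic holds for each large degree $N$, not merely for infinitely many $N$. That is exactly what allows us to impose the extra congruence conditions on $N$ above.
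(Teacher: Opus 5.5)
Your proposal is correct and follows essentially the same route as the paper. It uses Rosen's Thm.~4.8 in its degree-counting form and imposes the CRT conditions $N\equiv e_0 \pmod m$ and $N\equiv 1$ modulo each prime of $n$ not dividing $m$, then invokes Lemma~\ref{lem:height}, Propositions~\ref{prop:free} and~\ref{prop:dc}, and Theorem~\ref{thm:presentation}(2). The only cosmetic difference is that you obtain $\fp\neq\fp_0$ from $N>d$, whereas the paper deduces it from $c\neq 0$.
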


\begin{proof}
The class of $c$ is invertible modulo $\fp$, so by Dirichlet's theorem for
$A$ \cite[Thm.~4.8]{Rosen} the number of monic irreducibles of degree $N$
congruent to $c$ modulo $\fp$ is \[q^N/(N(q^d-1))+O(q^{N/2}).\]
In particular, there is one in every sufficiently large
degree $N$. The congruence conditions $N\equiv e_0\pmod m$ and
$N\equiv1\pmod\ell$ for every prime $\ell\mid n$ with $\ell\nmid m$ have
pairwise coprime moduli, so they are simultaneously satisfied by infinitely
many $N$, and every such $N$ is coprime to $n$ (its prime factors dividing
$m$ are excluded by $\gcd(e_0,m)=1$). Supersingularity is
Lemma~\ref{lem:height}; $\fp\neq\fp_0$ because $c\neq0$; and
$c=\fp_0\bmod\fp$ by construction. The last statement is
Theorem~\ref{thm:presentation}(2).
\end{proof}

\section{Comparison with \cite{MP}}\label{sComparison}

	Suppose $\fp_0-1$ is irreducible and take $\fp=\fp_0-1$, so $d=r$,
	$c=1$, and $M=\F_{q^{nr}}$ with $\pi|_M$ the $q^r$-power Frobenius. 
	Note that $\gcd(m,d)=\gcd(m,r)=1$ automatically.
	
	\begin{lem}\label{lem:mp}
		Let $R_{\mathrm{MP}}$ denote the ring of $q^r$-linearized
		polynomials over $\F_{q^m}$ modulo $x^{[rn]}-x$, acting on
		$\F_{q^{rn}}$ by evaluation, whose left ideals are the skew cyclic
		codes of \cite{MP}. Then the subring of $R$ generated by $L$ and
		$\pi$ coincides with $R_{\mathrm{MP}}$ as a ring of operators on
		$M$, and multiplication $\kappa\otimes_{\F_q}R_{\mathrm{MP}}\To R$
		is an isomorphism of rings. In particular, $R=R_{\mathrm{MP}}$ if
		and only if $r=1$, in which case the codes of this paper at
		$\fp=\fp_0-1$ are exactly the skew cyclic codes of \cite{MP}, with
		$S$-normal elements corresponding to normal bases of
		$\F_{q^n}/\F_q$.
	\end{lem}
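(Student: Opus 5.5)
The plan is to make everything explicit at the prime $\fp=\fp_0-1$ and then reduce all claims to dimension counts against Proposition~\ref{prop:dc}(4). Since $\phi_{\fp}=\phi_{\fp_0}-1=\tau^{nr}-1$, we have $M=\F_{q^{nr}}$ and $c=1$. On $M$ the Frobenius $\pi$ is $x\mapsto x^{[r]}$, and $\zeta\in L$ acts by scalar multiplication. By the relation $\pi\zeta=\zeta^{[r]}\pi$, the subring $R_1\subseteq R$ generated by $L$ and $\pi$ equals $\sum_i L\pi^i$. Because $\pi^n=1$ on $M$ (as $c=1$), this is the set of operators $x\mapsto\sum_{i<n}a_ix^{[ri]}$ with $a_i\in L$. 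That is exactly the image of $R_{\mathrm{MP}}$ acting on $\F_{q^{rn}}$: a $q^r$-polynomial over $\F_{q^m}$ reduced modulo $x^{[rn]}-x$ has this form, and composition matches multiplication in $R_{\mathrm{MP}}$.

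Next I will check that $R_{\mathrm{MP}}$ acts faithfully, so that the identification is one of rings. A nonzero $\sum_{i<n}a_ix^{[ri]}$ has degree at most $q^{r(n-1)}<q^{rn}$, so it cannot vanish on all of $\F_{q^{rn}}$. Hence $\dim_{\F_q}R_1=mn$. Equivalently, by Proposition~\ref{prop:dc}(4) restricted to $L\otimes\F_q[\pi|_M]$, the map $L\otimes_{\F_q}\F_q[\pi|_M]\to R_1$ is bijective. Here I use that $\F_q[\pi|_M]$ has dimension $n$, since the minimal polynomial $x^n-1$ of $\pi|_M$ over $\kappa$ already forbids lower-degree $\F_q$-relations.

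For the tensor statement, $\kappa$ acts through $\phi$ and is central in $E$, so the multiplication map $\kappa\otimes_{\F_q}R_{\mathrm{MP}}\to R$ is a ring homomorphism. I will show it is bijective by factoring it as
\[
\kappa\otimes_{\F_q}R_{\mathrm{MP}}\cong\kappa\otimes_{\F_q}L\otimes_{\F_q}\F_q[\pi|_M]\cong L\otimes_{\F_q}\bigl(\kappa\otimes_{\F_q}\F_q[\pi|_M]\bigr)\To L\otimes_{\F_q}S\To R .
\]
The map $\kappa\otimes_{\F_q}\F_q[\pi|_M]\to S=\kappa[\pi|_M]$ is surjective. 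Both sides have $\F_q$-dimension $dn$ by Proposition~\ref{prop:free}(1), so it is an isomorphism. The last arrow is bijective by Proposition~\ref{prop:dc}(4).

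It follows that $\dim_{\F_q}R=r\cdot\dim_{\F_q}R_{\mathrm{MP}}$, because $d=r$. Hence $R=R_{\mathrm{MP}}$ exactly when $r=1$. In that case $\kappa=\F_q$ and the left ideals coincide. An element $\alpha$ is $S$-normal iff $\alpha,\alpha^{q},\dots,\alpha^{q^{n-1}}$ is an $\F_q$-basis of $\F_{q^n}$, i.e.\ a normal basis.

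The main point requiring care is the faithfulness/dimension step. Specifically, I must confirm that $\F_q[\pi|_M]$ and $\kappa$ are linearly disjoint inside $S$, i.e.\ that $\kappa$, acting through $\phi$ rather than by scalars, does not collapse dimensions. The dimension count above via Proposition~\ref{prop:free}(1) is what I would rely on for this.
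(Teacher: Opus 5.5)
Your proposal is correct and follows the paper's proof essentially step for step: you identify the subring generated by $L$ and $\pi$ with $R_{\mathrm{MP}}$ via $x^{[ri]}\mapsto\pi^i$, obtain $\dim_{\F_q}=mn$ from faithfulness, and show that the ring map $\kappa\otimes_{\F_q}R_{\mathrm{MP}}\to R$ is bijective by a dimension count against Proposition~\ref{prop:dc}(4) using $d=r$. The differences are cosmetic. You prove faithfulness by counting roots of a nonzero $q^r$-polynomial of degree at most $q^{r(n-1)}$, where the paper invokes Artin's independence of the automorphisms $v\mapsto v^{[ri]}$. You also factor the tensor map through $\kappa\otimes_{\F_q}\F_q[\pi|_M]\cong S$, where the paper instead checks surjectivity and compares total dimensions.
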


	\begin{proof}
		The constants $L$ act on $M$ by scalar multiplication and $\pi$ by
		$v\mapsto v^{[r]}$, so $\sum_{i<n}F_i\pi^i$ with $F_i\in L$ is
		the operator $v\mapsto\sum_{i<n}F_iv^{[ri]}$. The assignment
		$x^{[ri]}\mapsto\pi^i$ therefore maps $R_{\mathrm{MP}}$ onto the
		subring $L[\pi]$ of $R$: it respects the commutation rule of
		$q^r$-polynomials, since $\pi F=F^{[r]}\pi$ for $F\in L$, and it
		kills $x^{[rn]}-x$, since $\pi^n=1$ on $M$. It is injective,
		because the maps $v\mapsto v^{[ri]}$, $0\leq i<n$, are distinct
		automorphisms of $\F_{q^{rn}}$, hence linearly independent over
		$\F_{q^{rn}}\supseteq L$ by Artin's theorem. Thus
		$\dim_{\F_q}L[\pi]=mn$.

		Since $\kappa\subseteq R_0$ and $R$ centralizes $R_0$, the
		elements of $\kappa$ commute with $R$, so multiplication
		$\kappa\otimes_{\F_q}L[\pi]\To R$ is a ring homomorphism. It is
		surjective because $R=L\cdot S=L\cdot\kappa[\pi]=\kappa\cdot L[\pi]$
		by Proposition~\ref{prop:dc}(4), and
		$\dim_{\F_q}(\kappa\otimes_{\F_q}L[\pi])=r\cdot mn=mnd=\dim_{\F_q}R$,
		as $d=r$. Hence it is an isomorphism, and $R=L[\pi]$ if and only
		if $r=1$. In that case $\kappa=\F_q$ acts on $M=\F_{q^n}$ by
		scalars, $\pi$ is the $q$-power Frobenius, and $S$-normality of
		$\alpha$ means that $\alpha,\alpha^{[1]},\dots,\alpha^{[n-1]}$
		is an $\F_q$-basis of $\F_{q^n}$.
	\end{proof}
	
	In this comparison, the twist parameter $r$ of \cite{MP} is the degree of the $A$-characteristic $\fp_0$, 
	and the coprimality hypothesis $\gcd(r,n)=1$ under which
	\cite[Thm.~6]{MP} identifies rank-BCH codes with generalized Gabidulin
	codes (via \cite[Lem.~2]{KG} $=$ \cite[Lem.~4]{MP}) is, on our side,
	the supersingularity of $\phi$ (Lemma~\ref{lem:height}).  
	
	We also point out that 
	Corollaries 5 and 6 of \cite{MP} are \textbf{false} as stated. The hypothesis
	of \cite[Cor.~6]{MP} requires only that the \emph{first} $\delta-1$
	twists $\alpha,\alpha^{[r]},\dots,\alpha^{[r(\delta-2)]}$ be linearly
	independent over the constant field (for $\delta=2$, merely that
	$\alpha\neq0$), whereas \cite[Prop.~1]{ChaussadeLU} demands
	independence of the \emph{full} orbit, i.e., \textit{$S$-normality}. 
	For example, take $\fp_0=T$ (so $r=1$), $\fp=T-1$, any $m\geq2$ with $m\mid n$, and
	\[
	f=\pi-1,\qquad \alpha=1,\qquad \delta=2 .
	\]
	Then $f(1)=1^{[r]}-1=0$, so $\alpha\in Z(f)$ ($=\F_q$), and
	$\alpha\neq0$, so the hypotheses of \cite[Cor.~6]{MP} hold; yet the
	coefficients of $f$ lie in $\F_q$, so $\wtR(f)=1<2=\delta$.
	Proposition 7 of \cite{MP}, deduced from Corollary 6, fails for the
	same example: the rank-BCH code of designed distance $2$ attached to
	$\alpha=1$ is generated by $x^{[1]}-x$ and has minimum rank distance
	$1$. The gap is in the proof of \cite[Cor.~5]{MP}, where
	$\alpha^{[r(\delta-1)]}$ is taken to be linearly independent of the
	earlier twists, which the hypothesis does not provide.


\bibliographystyle{amsplain}
\bibliography{bibliography}

\end{document}